\theoremstyle{plain}
\newtheorem*{theorem*}{Theorem}
\newtheorem{theorem}{Theorem}
\theoremstyle{definition}
\newtheorem{definition}{Definition}[section]
\theoremstyle{lemma}
\newtheorem{lemma}{Lemma}[section]
\theoremstyle{corollary}
\newtheorem{corollary}{Corollary}[section]
\theoremstyle{claim}
\newtheorem{claim}{Claim}[section]
\theoremstyle{proposition}
\newtheorem{proposition}{Proposition}[section]
\theoremstyle{remark}
\begin{document}
\global\long\def\l{\lambda}
\global\long\def\ep{\epsilon}

\title{Outer Billiards with Contraction: Regular Polygons}
\author{In-Jee Jeong}

\maketitle

\begin{abstract}
	We study outer billiards with contraction outside regular polygons. For regular $n$-gons with $n = 3, 4, 5, 6, 8$, and $12$, we show that as the contraction rate approaches $1$, dynamics of the system converges, in a certain sense, to that of the usual outer billiards map. These are precisely the values of $n \geq 3$ with $[\mathbb{Q}(e^{2\pi i/n}):\mathbb{Q}] \leq 2$. Then we discuss how such convergence may fail in the case of $n=7$.
	
\end{abstract}
\section{Introduction}

Polygonal outer billiard is a fascinating problem. This system exhibits very diverse
behavior for different polygons. While there are many interesting results in this area,
several important problems are open. For one thing, it is not known if every orbit remains bounded
for generic convex polygons. The class of convex polygons whose orbits are known to be bounded
is nowhere dense in the set of convex polygons (see \cite{MR1145593,MR2991430}).
On the other hand, `irrational kites' are the only known examples of having unbounded orbits \cite{MR2854095,MR2562898}. Second, we know little about the structure of the set of periodic points (which comes as a union of polygonal `tiles') for generic convex polygons.

\begin{figure}
	\begin{centering}
		\includegraphics[scale=0.5]{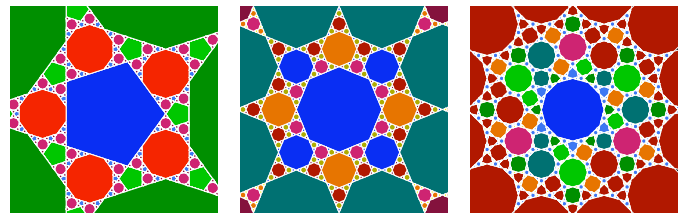}
		\par\end{centering}
	\caption{The Outer Billiards Map; Picture by R. Schwartz}\label{fig:cat1}
\end{figure}

These problems in polygonal outer billiards are difficult due to the lack of general methods for attacking them. For this reason, most of the research in polygonal outer billiards have
been conducted by case studies; the case of regular pentagon was analyzed in \cite{MR1354670},
trapezoids in \cite{MR2708027}, kites in \cite{MR2854095,MR2562898}, `semiregular' octagons 
in \cite{1006.2782}, a few regular polygons in \cite{MR2835332}. 

In this paper, we study the outer billiards map outside regular polygons, but now composed with an affine contraction. For each convex polygon, we get a one-parameter family of dynamical systems parametrized by the contraction rate $0<\l \leq 1$. We mostly focus on the case $\l \approx 1$, and we consider it as a perturbation of the usual outer billiards map. Our main motivation for applying contraction is to take the limit $\l \nearrow 1$ and study whether the asymptotic dynamics is related in any sense to that of the usual outer billiards system. To this end, we introduce two notions that are related with each other; $\l$-stability and convergence of picture. The first notion asks whether a periodic point of the outer billiards map persists for values of contraction close to $1$. The second one asks if there is a convergence of the partition of the plane according to the $\omega$-limit set as the contraction rate approaches $1$. In the course of establishing $\l$-stability for certain periodic points, we will discover some symmetry of the outer billiards map which is not so apparent otherwise (see Corollary \ref{cor:lattice}).

Outer billiards with contraction was studied as well in  \cite{Jeong2014,GianluigiDelMagno2013}, which discuss several other motivations for its study. The paper \cite{GianluigiDelMagno2013} contains many beautiful pictures of the system.

We review some basic facts regarding regular polygons. For our purposes, it will be convenient to divide regular $n$-gons into three categories:

\begin{itemize}
	\item Category I: $n=3, 4,$ and $6$. These are precisely the values of $n$ where regular $n$-gon is a lattice polygon, up to an affine transformation of the plane. For these, the outer billiard map is `trivial' as all orbits are periodic.
	
	\item Category II: $n=5, 8$, and $12$. Together with Category I, these are precisely the values where $\phi(n) \leq 2$ where $\phi$ is the Euler totient function. For them, the outer billiards map is `completely renormalizable'; some of its consequences are density of periodic orbits and self-similar fractal structure of the set of non-periodic points. 
	
	\item Category III: All other values of $n \geq 3$. 
\end{itemize}

Not much is known for the dynamics outside regular polygons in Category III. Two very interesting open problems are whether periodic tiles are dense in the domain and whether there exists infinitely many non-similar tiles. We will come back to these questions later. 

Let us state the main results of this paper. To begin with, for Category I polygons we have a good understanding of the dynamics:

\begin{claim}Let $P$ be a regular polygon from Category I. Then for any value of the contraction $0<\l<1$, there exists finitely many periodic orbits to which all other orbits are attracted. As we take the limit $\l \nearrow  1$, we recover all periodic orbits of the (usual) outer billiards map outside $P$. 
\end{claim}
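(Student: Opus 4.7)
The plan is to use the rigidity of lattice polygons. After an affine change of coordinates one may assume $P$ has vertices in $\mathbb{Z}^2$, and in this setting the classical structure theorem for outer billiards on lattice polygons gives a decomposition of the exterior of $P$ into countably many polygonal tiles $\{U_s\}$, each labeled by the periodic vertex sequence $s=(v_{i_0},\dots,v_{i_{N-1}})$ visited by points of $U_s$, on which the iterate $T^N$ is the identity. My proof would be organized around this decomposition.

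For fixed $0<\lambda<1$, write the local form of $T_\lambda$ on the piece selecting vertex $v$ as $T_\lambda(x)=(1+\lambda)v-\lambda x$. Iterating the estimate $|T_\lambda(x)|\leq\lambda|x|+2\max_i|v_i|$ shows every orbit is eventually absorbed into a disk $B_{R(\lambda)}$ of radius $\sim 1/(1-\lambda)$. Inside $B_{R(\lambda)}$ only finitely many tiles $U_s$ are relevant, so $T_\lambda$ is a piecewise affine map with finitely many pieces and uniform linear part $-\lambda I$. The existence of a finite global attractor then follows either from the general theory of piecewise affine contractions or, in our rigid setting, by a direct symbolic argument: any periodic orbit is the unique fixed point of $T_\lambda^N$ along its symbol sequence, and only finitely many symbol sequences can give a fixed point lying inside $B_{R(\lambda)}$.

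For the recovery statement, fix a tile $U_s$ of $T$ with vertex sequence $s$ of length $N$. Composing $T_\lambda$ along $s$ gives
\[
T_\lambda^N(x)=(-\lambda)^N x+c_s(\lambda),
\]
an affine contraction with unique fixed point $x^*_s(\lambda)=c_s(\lambda)/(1-(-\lambda)^N)$. Since $T^N=\mathrm{id}$ on $U_s$, both $c_s(1)$ and $1-(-\lambda)^N$ vanish at $\lambda=1$ (in the relevant even-$N$ case), and L'H\^opital produces a finite limit $x^*_s(1)\in\overline{U_s}$, which one checks directly to be a periodic point of $T$ with vertex sequence exactly $s$; for instance, for the square with $s=(v_1,v_2,v_3,v_4)$ this yields the period-four cycle through $(2,0)$. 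Letting $s$ range over all admissible vertex sequences then recovers every periodic orbit of $T$ in the limit.

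The main obstacle I anticipate is a \emph{persistence} step: for $\lambda$ close to $1$, the fixed point $x^*_s(\lambda)$ computed symbolically must actually lie inside the correct tile of the partition induced by $T_\lambda$, so that the prescribed vertex sequence is realized dynamically and $x^*_s(\lambda)$ is a bona fide periodic orbit of $T_\lambda$. For any single tile this follows from continuity of $x^*_s(\lambda)$ and openness of $U_s$, but since the absorbing disk $B_{R(\lambda)}$ grows unboundedly as $\lambda\nearrow 1$, the estimate must be made uniform over an increasing collection of tiles, which likely requires a quantitative bound on how much the symbol partition of $T_\lambda$ deviates from that of $T$ as a function of $1-\lambda$ and of distance from $P$.
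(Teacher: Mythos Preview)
Your framework is sound, but there are two places where the paper supplies an argument that your outline either leaves open or treats too loosely.

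\textbf{Attraction to periodic orbits.} The appeal to ``general theory of piecewise affine contractions'' is a genuine gap. In two dimensions there is no blanket theorem saying that every orbit of a piecewise contraction with finitely many pieces converges to a periodic orbit; results of that type are generic or almost-everywhere statements at best. Your ``direct symbolic argument'' only bounds the number of periodic orbits, not that every orbit is asymptotic to one. The paper instead exploits a monotone integer quantity specific to the lattice picture: for $n=4$ (and analogously for $n=3,6$) the exterior is tiled by unit squares carrying an index $k\ge 1$, and one checks that $T_\lambda$ never increases this index along an orbit. Since the index is a positive integer it eventually stabilises; once it does, the orbit follows the fixed periodic code of the tile it occupies and hence converges to the unique $T_\lambda$-periodic point with that code. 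Together with your absorbing-disk bound this gives finitely many attracting periodic orbits. This Lyapunov-function step is the missing idea.

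\textbf{Persistence.} The obstacle you correctly flag is exactly what the paper calls $\lambda$-stability, and the paper resolves it not by a uniform deviation estimate over growing families of tiles but by a barycentre criterion (Proposition~\ref{prop:criterion}): a tile $Q$ of even period $k$ is $\lambda$-stable if and only if, in the unfolding picture, the average $\tfrac{1}{k}\sum_{i=0}^{k-1} p_i$ of the reflected copies of a basepoint $p\in P$ lies in the interior of $Q$; this average is precisely your L'H\^opital limit $x^*_s(1)$. For Category~I polygons every tile is \emph{symmetric} in the sense that its $T$-orbit coincides with its orbit under rotation by $2\pi/n$ about the centre of $P$. The unfolding then inherits this rotational symmetry, forcing the barycentre to be the centre of $Q$, which is interior. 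Hence every tile is $\lambda$-stable and the recovery statement follows, with no need for the quantitative uniformity you anticipated.
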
\label{claim1}

For Category I, II, we have: 

\begin{claim}Let $P$ be a regular polygon from Category I, II. Then any periodic orbit of the outer billiards map for $P$ is $\l$-stable, and we have convergence of picture.
\end{claim}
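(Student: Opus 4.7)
The plan is to establish $\l$-stability and convergence of picture in two steps: a local contraction-mapping argument for individual periodic orbits, and a renormalization argument to handle the infinite family of tiles in Category II uniformly.

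For the local step, let $p$ be a periodic point of $T$ of period $k$. Then $p$ lies in the interior of its periodic tile, and on a neighborhood of the orbit $\{p,T(p),\ldots,T^{k-1}(p)\}$ the contracted map $T_\l$ is a composition of $k$ affine maps of the form $x\mapsto (1+\l)v_i - \l x$, each with linear part $-\l I$. Consequently $T_\l^k$ is affine near $p$ with linear part $(-\l)^k I$, whose operator norm $\l^k$ is strictly less than $1$. The Banach fixed point theorem then supplies a unique fixed point $p_\l$ of $T_\l^k$ close to $p$; this is an attracting periodic point of $T_\l$, depending continuously on $\l$, with $p_\l\to p$ as $\l\nearrow 1$. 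A refinement of the same estimate identifies the basin of attraction $D_\l$ of $p_\l$ and should show that $D_\l$ Hausdorff-converges to the periodic tile $D$ of $p$.

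In Category I, Claim 1 already produces only finitely many periodic orbits, and the lattice periodicity of the tile structure lets the local argument give convergence of picture at once. In Category II, however, there are countably many tiles of arbitrarily small size, and one must argue uniformly across scales. Here I would invoke the renormalization of $T$: there exists a scaling factor $s<1$ and a region $U$ such that, up to scaling by $s$, the first-return map of $T$ to $U$ reproduces $T$ itself. The contracted map should satisfy an analogous identity, namely that the first-return map of $T_\l$ to $U$ is conjugate, after rescaling, to $T_{\l'}$ for a derived parameter $\l'=\l'(\l)$ with $\l'\nearrow 1$ as $\l\nearrow 1$. Iterating this renormalization reduces stability and basin-convergence at a tile of scale $s^j$ to the base-scale statement already handled.

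The main obstacle is uniformity of convergence across all scales in Category II. At any fixed $j$ the local plus renormalization arguments suffice, but convergence of picture requires the Hausdorff error of $D_\l\,\triangle\, D$ to be controlled uniformly over the entire countable family of tiles. The estimate to aim for is that the error at scale $s^j$ is bounded by $s^j$ times the base-scale error plus a correction of order $|\l'-\l|$, which would give geometric decay dominating the infinitude of tiles. Pinning down the exact renormalization identity for $T_\l$ and controlling the dependence $\l\mapsto\l'$ is the technical heart of the proof, and it is precisely the point at which the classification $\phi(n)\le 2$ enters decisively: only for these $n$ does the clean renormalization of the outer billiards map exist.
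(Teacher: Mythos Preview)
Your local step contains a genuine gap that invalidates the whole argument. You assert that the unique fixed point $p_\l$ of the affine map $T_\l^k$ satisfies $p_\l\to p$ as $\l\nearrow 1$, but this is false in general. At $\l=1$ the map $T^k$ is the \emph{identity} on the entire periodic tile $Q$ (a composition of an even number of point reflections), so every point of $Q$ is a fixed point and continuity in $\l$ tells you nothing about where $p_\l$ lands. Writing $T_\l^k(x)=(-\l)^k x + c(\l)$, one has $c(1)=0$, and L'H\^opital's rule shows that
\[
\lim_{\l\nearrow 1} p_\l \;=\; \lim_{\l\nearrow 1}\frac{c(\l)}{1-(-\l)^k}
\]
is a \emph{specific} point determined by the combinatorics of the code---in the paper's language, the barycenter $\frac{1}{k}\sum_i p_i$ of the unfolded copies of $P$. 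This point need not lie in $Q$; when it does not, $p_\l$ is not a legitimate $T_\l$-periodic point for $\l$ near $1$, and the tile is not $\l$-stable. Your argument, if it worked, would prove $\l$-stability for every periodic tile of every convex polygon, contradicting Claim~3 for the regular septagon.

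The paper's route is quite different. It first proves the barycenter criterion (Proposition~\ref{prop:criterion}): $Q$ is $\l$-stable if and only if the barycenter of the unfolding lies in the interior of $Q$. It then shows that for $n\in\{3,4,5,6,8,12\}$ every periodic tile is \emph{symmetric} in the sense of Definition~\ref{def:sym} (its $T$-orbit is invariant under the $n$-fold rotation), which forces the barycenter to coincide with the center of $Q$ and hence lie inside it. For Category~I this symmetry is immediate from the lattice structure; for Category~II it is deduced from the renormalization scheme for the \emph{undeformed} map $T$, which gives transitivity of $T$ on each level of tiles. No renormalization of $T_\l$ itself is needed, and there is no uniformity issue across scales: once every tile is $\l$-stable and tiles are dense, convergence of picture follows pointwise.
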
\label{claim2}

This notion of $\l$-stability may distinguish Category III polygons from others:

\begin{claim}Let $P$ be the regular septagon. Then there exists a periodic tile which is not $\l$-stable and convergence of picture fails. 
\end{claim}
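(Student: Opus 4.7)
The plan is to exhibit an explicit periodic tile $T$ of the classical map $T_1$ outside the regular septagon $P$ and show that the unique attracting periodic point of $T_\l$ forced by the itinerary of $T$ leaves $T$ in the limit $\l\nearrow 1$.

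Writing the contracted map associated with a vertex $v$ as $T_{\l,v}(x)=(1+\l)v-\l x$, a direct induction gives
\[
T_\l^k(x)=(-\l)^k x+(1+\l)\sum_{j=1}^{k}(-\l)^{k-j}v_{i_j}
\]
for any itinerary $(v_{i_1},\dots,v_{i_k})$. For odd $k$ the resulting unique fixed point $x_\l$ depends continuously on $\l$ and converges to the unique $T_1^k$--fixed point, which by assumption lies inside the corresponding odd--period orbit, so odd tiles are automatically $\l$--stable and we need only consider even $k$. For an even--period itinerary producing a genuine $T_1$--tile, the periodicity condition $\sum_{j=1}^{k}(-1)^j v_{i_j}=0$ makes both numerator and denominator of
\[
x_\l=\frac{(1+\l)\sum_j(-1)^j\l^{k-j}v_{i_j}}{1-\l^k}
\]
vanish at $\l=1$, and a first--order expansion in $(1-\l)$ yields the explicit limit
\[
x_*:=\lim_{\l\to 1^-}x_\l=\frac{2}{k}\sum_{j=1}^{k}(-1)^j j\, v_{i_j}.
\]

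The heart of the argument is to produce an even--period septagon itinerary for which the explicit point $x_*$ fails to lie in the tile $T$ that the itinerary cuts out. For regular $n$--gons in Categories I and II, Claim 2 asserts $\l$--stability of every periodic tile and hence in particular forces $x_*\in T$ always; the underlying reason is the rigidity of the cyclotomic extension, which has degree at most two. For the septagon $[\mathbb{Q}(e^{2\pi i/7}):\mathbb{Q}]=6$, so the family of rational combinations of vertices is much richer and the linear combination $x_*$ need not satisfy the finite list of half--plane inequalities defining $T$. I would run a computer--assisted search over septagon itineraries of modest even period, computing both the tile $T$ (as the intersection of the preimages under $T_1^{j-1}$ of the appropriate cones of $T_1$) and the point $x_*$, until a pair $(T, x_*)$ with $x_*\notin T$ is located.

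Once such an itinerary is in hand, both conclusions of the claim follow. Since $x_\l\to x_*\notin T$ and $T_\l^j(x_\l)\to T_1^j(x_*)$ uniformly in $j\le k$, for all $\l$ sufficiently close to $1$ some iterate $T_\l^j(x_\l)$ lies in a cone different from the one prescribed by the itinerary; hence no periodic orbit of $T_\l$ shadows $T$ and $T$ is not $\l$--stable. Consequently the $\omega$--limit partition for $T_\l$ can contain no component converging to $T$ as $\l\nearrow 1$, so convergence of picture also fails. The main obstacle is locating a concrete septagon tile with $x_*\notin T$: the required verification is finite and explicit, but no renormalization is available to describe the tiles in closed form, so in practice one must rely on a computer search together with a case--by--case check of the half--plane inequalities that cut out the chosen tile.
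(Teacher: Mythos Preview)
Your approach coincides with the paper's: derive the limit $x_*=\lim_{\l\nearrow 1}q(\l)$ of the hypothetical periodic point and test whether it lies in the tile. The paper records this test as Proposition~\ref{prop:criterion}, recasting your formula for $x_*$ as the barycenter of the centers of the unfolded copies of $P$; the two criteria are equivalent.

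Two points deserve comment. First, ``modest even period'' seriously underestimates what is required: the non-stable tile the paper actually exhibits, discovered by R.~Schwartz, has period $57848$ and diameter roughly $3\times10^{-4}$. The paper does not find this tile by an independent search; it cites Schwartz's computation and then verifies the barycenter criterion by a computer check that can in principle be carried out in exact arithmetic in $\mathbb{Q}(e^{2\pi i/7})$. Second, your deduction that convergence of picture fails is not complete, and the paper does not claim a rigorous proof of that part either---it says only that convergence is ``very unlikely'' given the non-stable tile. The absence of a $T_\l$-periodic orbit with the \emph{same} code as $Q$ does not by itself prevent $\mathrm{SCR}_\l(x)\to Q$ for $x\in Q$: if $T_\l^k(Q)$ happened to land entirely inside a single neighbouring tile, every $x\in Q$ would acquire one common $T_\l$-code and one could still have $\mathrm{SCR}_\l(x)=Q$. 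Ruling this out requires additional information about where $T_\l^k(Q)$ sits relative to the singular rays. On this second assertion your argument and the paper's are at the same heuristic level.
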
\label{claim3}

The structure of this paper is as follows. In Section \ref{Pre}, we collect basic facts about the outer billiards map and define the terms that are used throughout the paper, in particular the terms that are previously mentioned. In addition, we introduce a $\l$-stability criterion which will be used to establish aforementioned claims. Section \ref{sec:main} has three subsections and we address each claim in each subsection. 

\section{Preliminaries}\label{Pre}

\subsection{Basic Definitions}

We start by defining our systems.

\begin{definition}[The System]
	\label{system}
	Given a pair $(P,\l)$ of a convex polygon $P$ and a number $0<\l<1$, we define the outer billiards with contraction $T_\l$ as follows. For a generic point $x\in\mathbb{R}^{2}\backslash P$,
	we can find a unique vertex $v$ of $P$ such that $P$ lies on the
	left side of the ray starting from $x$ and passing through $v$. On this ray, we pick the point $y$ which lies on the opposite side of $x$ with respect to $v$ and satisfies $|xv|:|vy|=1:\l$. Then we define $T_\l x=y$ (Figure \ref{fig:The-outer-billiard}). The map $T_\l$ is well-defined for all points on $\mathbb{R}^{2}\backslash P$ except for points on the union of singular rays extending the sides of $P$. If we denote this singular set by $S$, then we have a well-defined map on the domain $X:=\mathbb{R}^2 \backslash \big(P\cup (\cup_{i=0}^{\infty}T_\l ^{-i}S))$.
\end{definition}

\begin{figure}
	\begin{centering}
		\includegraphics{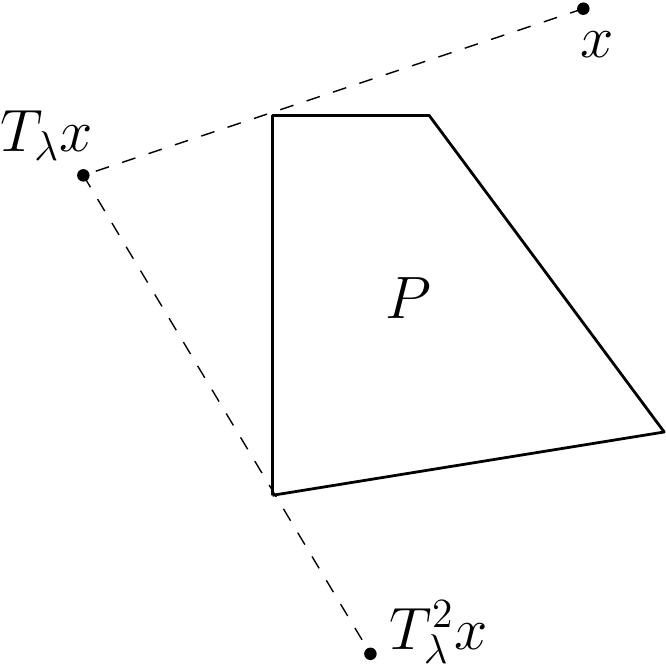}
		\par\end{centering}
	\caption{Outer billiards with contraction}\label{fig:The-outer-billiard}
\end{figure}

Note that the domain $X$ depends on the polygon as well as the contraction rate $\l$. The case $\l =1 $ corresponds to the usual outer billiards, and we denote the map simply by $T$. 

One observes that the dynamics of outer billiards, either with or without contraction is invariant under an orientation preserving affine transformation of the plane. Indeed, if two convex polygons are mapped to each other by such a transformation, the same transformation acts as a conjugacy between maps $T_\l$ for these polygons. 

We say that a polygon is lattice if its every vertex has integer coordinates. A polygon is called affine-lattice if it can be mapped to a lattice polygon via an orientation preserving affine transformation of the plane. The following result proves the statement in the introduction regarding regular polygons in Category I, as they are the only affine-lattice regular polygons.

\begin{theorem*}[\cite{MR1145593}]
	Every point is periodic for $T$ when $P$ is a convex affine-lattice polygon.
\end{theorem*}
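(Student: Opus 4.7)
The plan is to first reduce to the case $P \subset \mathbb{Z}^2$ via the affine invariance of the dynamics noted above, and then combine two ingredients: (a) an arithmetic observation that every orbit lies in a discrete set, and (b) a boundedness statement that every orbit stays in a compact region. Together these force each orbit to be finite, and then invertibility of $T$ on its domain $X$ promotes finiteness to periodicity.

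Ingredient (a) is essentially immediate. On each unbounded cell of the partition of $\mathbb{R}^2 \setminus P$ by the extensions of the sides of $P$, the map $T$ is a reflection $x \mapsto 2v-x$ about a specific vertex $v$. Composing two such reflections yields $T^2(x) = x + 2(v'-v)$. Under the lattice hypothesis we have $v'-v \in \mathbb{Z}^2$, so the displacement lies in $2\mathbb{Z}^2$; in particular the entire forward $T^2$-orbit of any $x$ is contained in the coset $x + 2\mathbb{Z}^2$. Granted (b), this coset intersected with a bounded region is finite, so the $T^2$-orbit is finite, hence the $T$-orbit is eventually periodic. Because $T$ is a bijection on $X$, eventual periodicity in fact forces genuine periodicity.

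The main work is in (b), the boundedness of orbits. The approach I would take is to construct, for every $R>0$, a bounded $T$-invariant region $\Omega_R$ that contains $B_R(0) \setminus P$. Far from $P$, the cell structure stabilizes and $T$ cycles through the vertices in a definite combinatorial pattern determined only by the angular position of $x$; a well-chosen iterate $T^{2N}$ on each sufficiently large angular sector reduces to a pure translation, and the lattice hypothesis guarantees that these translation vectors lie in $2\mathbb{Z}^2$. From finitely many rational-vertex polygonal pieces together with their $2\mathbb{Z}^2$-translates one can then assemble an annular region that is invariant under $T$, and iterating the construction outwards covers the exterior of $P$ by nested invariant annuli, which yields boundedness of every orbit.

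The main obstacle is the geometric bookkeeping in this annular construction: identifying the right fundamental polygonal pieces on each sector, and verifying that their $2\mathbb{Z}^2$-translates fit together without overlap or gap to form a genuinely $T$-invariant annulus. The lattice hypothesis enters precisely at the point where we need the translation vectors at infinity to be commensurable; for a generic convex polygon these displacements are irrationally related, and this is exactly the source of the major open problem on boundedness mentioned in the introduction.
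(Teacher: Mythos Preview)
The paper does not actually give a proof of this theorem: it is quoted as a known result from \cite{MR1145593} and no argument is supplied. So there is nothing in the paper to compare your proposal against line by line.

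That said, your outline is the standard strategy used in the cited reference and in the related papers of Vivaldi--Shaidenko and Gutkin--Sim\'anyi. Ingredient~(a) is correct as stated and is the heart of the matter: after an affine change bringing the vertices into $\mathbb{Z}^2$, the second iterate $T^2$ acts on each piece as a translation by a vector in $2\mathbb{Z}^2$, so every $T^2$-orbit lies in a single coset $x+2\mathbb{Z}^2$; combined with boundedness this forces finiteness, and bijectivity of $T$ on its domain upgrades eventual periodicity to periodicity. Ingredient~(b) is where the actual work lies, and your description of the mechanism---the asymptotic combinatorics far from $P$, the resulting ``necklace'' of invariant annular regions built from finitely many polygonal pieces and their $2\mathbb{Z}^2$-translates---is accurate in spirit. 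You correctly flag that the nontrivial content is the geometric verification that these pieces tile an invariant annulus, and that the lattice hypothesis is exactly what makes the translation vectors commensurable so that the tiling closes up.

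In short: your proposal matches the approach of the literature that the paper cites, the discreteness step is complete, and the boundedness step is honestly identified as a sketch whose details you would still need to fill in.
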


Now our goal consists of defining the notion of $\l$-stability.

\begin{definition}[Symbolic Coding]
	Assume that a convex polygon $P$ and a value $0<\l \leq 1$ is given. Label the vertices of $P$ by $1, 2,...,n$ where $n$ is the number of vertices of $P$. Then for each point $x$ in the domain $X$, we associate an infinite sequence of integers (the code of $x$) $\{a_k(x)\}_{k=0}^{\infty}$ where $a_k$ is the label of the vertex $T_\l^k (x)$ gets reflected on. 
	Now for each $x$, we consider the set of points in $X$ which share the same code with $x$; this is a convex set. We denote it by $\mathrm{SCR}_\l(x)$ (same combinatorial region).
\end{definition}

\begin{lemma}[Periodic Tile] \label{lem:tile}
	Let $x$ be a periodic point for $T$ outside a polygon $P$. Then the maximal connected domain of periodic points containing $x$ is an (open) convex polygon. We call this polygon the periodic tile (for $x$). 
	
	On the other hand, if $x$ is a periodic point for $T_\l$ for some $0<\l<1$, then the maximal connected domain of points $y$ with the property that $d(T_\l ^i (y),T_\l ^i (x)) \rightarrow 0$ as $i \rightarrow \infty$ is again an (open) convex polygon.
\end{lemma}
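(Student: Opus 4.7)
The plan is to identify both the periodic tile (case $\lambda=1$) and the attracted domain (case $\lambda<1$) with $\mathrm{SCR}_\lambda(x)$, and then show the latter is an open convex polygon.

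First I would work out the local dynamics on $\mathrm{SCR}_\lambda(x)$. By definition of SCR, as $y$ varies over $\mathrm{SCR}_\lambda(x)$ the vertex $v_k$ about which $T_\lambda$ reflects at step $k$ is independent of $y$, so each $T_\lambda^k$ restricts to a single affine map there. Since $T_\lambda$ has the form $y \mapsto (1+\lambda)v_k - \lambda y$, the period-$p$ iterate $F := T_\lambda^p$ has linear part $(-\lambda)^p I$ and fixes $x$, so explicitly $F(y) = x + (-\lambda)^p(y-x)$. For $\lambda<1$, $F$ is a contraction of ratio $\lambda^p$ and $T_\lambda^i(y) - T_\lambda^i(x) = (-\lambda)^i(y-x)$, so $d(T_\lambda^i(y),T_\lambda^i(x)) = \lambda^i d(y,x) \to 0$ for every $y\in\mathrm{SCR}_\lambda(x)$. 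For $\lambda=1$, $F$ is either the identity ($p$ even) or the point reflection about $x$ ($p$ odd); in either case $\mathrm{SCR}_1(x)$ consists of periodic points, of period dividing $p$ or $2p$ respectively. Thus $\mathrm{SCR}_\lambda(x)$ is contained in the tile in both settings.

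Next I would verify $\mathrm{SCR}_\lambda(x)$ is an open convex polygon. It is cut out by the conditions, one for each $k\geq 0$, that $T_\lambda^k(y)$ lie in the correct sector at $v_k$; each such condition pulls back through the affine $T_\lambda^k$ to a pair of linear inequalities in $y$. For $\lambda=1$ the code is genuinely periodic, leaving at most $2p$ distinct inequalities. For $\lambda<1$ the geometric decay of $(-\lambda)^i(y-x)$ makes the tail constraints strictly weaker than the first $p$, so again only finitely many half-planes are binding. Maximality is then a consequence of $\partial\mathrm{SCR}_\lambda(x) \subseteq \bigcup_k T_\lambda^{-k}(S)$: at a boundary point $w$ some iterate $T_\lambda^k(w)$ lies on a singular ray, so $w\notin X$ and the orbit is undefined; hence $w$ is neither periodic nor attracted to the orbit of $x$, and any continuous path from $x$ leaving $\mathrm{SCR}_\lambda(x)$ through the tile or attracted set would have to pass through such a $w$, contradiction.

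The most delicate point I anticipate is the finiteness of binding constraints in the $\lambda<1$ case: one needs to confirm that deep iterates of the contracting return map $F$ sit uniformly inside their sectors of reflection, so that $\mathrm{SCR}_\lambda(x)$ really is a polygon rather than a convex region with accumulating curved boundary. This should follow cleanly from the geometric contraction of $F$ toward $x$ together with the fact that the orbit of $x$ lies in the interior of its sectors, so a small enough neighborhood of the orbit avoids every singular ray.
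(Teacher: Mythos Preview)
Your proof is correct and follows the same line as the paper's: both identify the tile with $\mathrm{SCR}_\lambda(x)$ and use that the return map $T_\lambda^p$ restricted to this piece is a single affine map with linear part $(-\lambda)^p I$ (the paper simply says ``$\Psi$ is a piecewise isometry, hence the identity on the piece containing $x$''). Your write-up is considerably more detailed than the paper's, which leaves convexity, polygonality, and maximality implicit.

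One small slip worth fixing: in the contractive case you claim the tail constraints are weaker than the first $p$, but for odd $p$ the factor $(-\lambda)^p$ is negative, so $\Phi_k(x)+(-\lambda)^{k+p}(y-x)$ is not a convex combination of $\Phi_k(x)$ and $\Phi_k(x)+(-\lambda)^k(y-x)$. The clean fix is the one the paper uses: replace $p$ by $2p$ so that $(-\lambda)^{2p}\in(0,1)$, and then the constraint at step $k+2p$ is genuinely implied by that at step $k$ by convexity of the sector; this leaves at most $2p$ binding half-planes.
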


\begin{proof}
	Consider the periodic part of code of $x$, say, $\{a_0,...,a_{k-1}\}$. Define $\Psi = T_{k-1} \circ ... \circ T_0$ where $T_i$ is the reflection across the vertex of $P$ corresponding to $a_i$. Then $\Psi$ is a piecewise isometry, and therefore it must be the identity map (we can assume $k$ is even by repeating the code once more if necessary) on the piece containing $x$. 
	The second statement can be proved analogously. 
\end{proof}

Note that for a $T$-periodic point $x$, the periodic tile containig $x$ is simply $\mathrm{SCR}_1(x)$. 

\begin{lemma} \label{uniqueness}
	For a convex polygon $P$ whose vertices are labeled by $1,2,...,n$, take any finite sequence $C=\{a_0,...,a_{k-1} \}$ where each $a_i \in \{1,2,...,n\}$. For each $0<\l<1$, there exists at most one periodic point whose code is the repetition of $C$. This periodic point, if exists, is given explicitly by the formula 
	\begin{equation}
		q_C(\l)=\dfrac{1-(-\lambda)}{1-(-\lambda)^{k}}(\sum_{i=0}^{k-1}(-\lambda)^{k-1-i}v_{i}).\label{eq:periodic_point}
	\end{equation}
	where $v_i \in \mathbb{R}^2$ is the coordinate of the vertex of $P$ with label $a_i$.
\end{lemma}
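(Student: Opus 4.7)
The plan is to reduce the question to solving a single affine fixed-point equation. The basic observation is that on any combinatorial cell where the reflecting vertex is a fixed $v$, the one-step map is the affine map $x \mapsto (1+\lambda) v - \lambda x$: Definition \ref{system} places $T_\lambda(x)$ on the ray through $v$ opposite to $x$ at distance $\lambda |xv|$, so $T_\lambda(x) - v = -\lambda (x - v)$. In particular, the linear part of a single step is scalar multiplication by $-\lambda$.

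Next I would iterate along the prescribed code. Writing $T_i(x) = (1+\lambda) v_i - \lambda x$ for the step that uses the vertex $v_i$ labelled $a_i$, a direct induction on $k$ gives
\begin{equation*}
\Phi_C(x) := T_{k-1} \circ \cdots \circ T_0(x) = (1+\lambda) \sum_{i=0}^{k-1} (-\lambda)^{k-1-i} v_i + (-\lambda)^k x.
\end{equation*}
Since $0 < \lambda < 1$ the scalar $(-\lambda)^k$ has modulus strictly less than one, so $\Phi_C$ is an affine contraction of $\mathbb{R}^2$ with a unique fixed point. Solving $\Phi_C(x) = x$ and rewriting $1+\lambda = 1-(-\lambda)$ reproduces exactly the formula \eqref{eq:periodic_point}. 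Any periodic point of $T_\lambda$ whose code is the repetition of $C$ is necessarily a fixed point of $\Phi_C$, so it must equal $q_C(\lambda)$, giving both uniqueness and the explicit form.

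Two points are worth flagging, though neither is a real obstacle. First, the lemma claims only that at most one such periodic point exists, so I do not need to verify that $q_C(\lambda)$ actually lies in the combinatorial cells demanded by the code $C$; that is a separate admissibility question. Second, the denominator $1-(-\lambda)^k$ is nonvanishing precisely because $\lambda < 1$, so the formula genuinely uses the contraction regime and degenerates at $\lambda = 1$ (where $\Phi_C$ becomes an isometry, consistent with Lemma \ref{lem:tile}'s description of periodic tiles). The bulk of the work is the bookkeeping in the iterated composition; beyond the one-step formula, no further geometric input is required.
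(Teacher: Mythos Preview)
Your argument is correct and is essentially identical to the paper's: both compose the single-step affine maps $x\mapsto (1+\lambda)v_i-\lambda x$, obtain $\Phi(p)=(-\lambda)^k p+(1+\lambda)\sum_{i=0}^{k-1}(-\lambda)^{k-1-i}v_i$, invoke the Banach contraction principle for uniqueness, and solve for the fixed point to get \eqref{eq:periodic_point}. Your version is a bit more explicit about the induction and the admissibility caveat, but there is no substantive difference in approach.
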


\begin{proof}
	Consider the map $\Phi:=F_{k-1} \circ ... \circ F_0$ where $F_i$ is the reflection with respect to $v_i$ composed with the contraction by $\l$. Then $\Phi$ is a contractive map of the plane, so it has unique fixed point. This point may or may not be a valid periodic point for $T_\l$. The formula follows since $\Phi(p)=(-\l)^k p+(1+\l)(\sum_{i=0}^{k-1}(-\l)^{k-1-i}v_i)$. 
\end{proof}

\begin{definition}[$\l$-stability]
	Let $x$ be a periodic point for $T$ outside some polygon $P$. We say that the point $x$ (alternatively, the periodic tile containing $x$) is $\l$-stable if there exists $\ep>0$ such that for all $1-\ep<\l<1$, there exists a periodic point for $T_\l$ which has the same code with $x$. 
\end{definition}

That is, a periodic tile for $T$ is stable in this sense if it gives rise to a periodic orbit (which is unique by Lemma \ref{uniqueness}) for $T_\l$ when we slightly decrease $\l$ from 1. It is not a trivial matter to decide if a given tile is $\l$-stable or not.

Finally, we introduce the notion of convergence of picture. The motivation is as follows. For each $n \geq 3$ and for any $0 < \l <1$, it is believed that there are only finitely many periodic orbits outside the regular $n$-gon and they all come from periodic orbits of the usual outer billiards map, which form a countable set. Therefore, we may pick a countable set of colors and associate each of them to a periodic orbit. Then for each $0 \l \leq 1$, color points of the domain according to the color associated with the periodic orbit that the point is asymptotic to (\cite{GianluigiDelMagno2013} contains many pictures of this kind). We ask if the pictures generated converge in some weak sense to the picture for the case $\l = 1$. 

\begin{definition}[Convergence of picture]\label{def:conv}
	For each point $x$ in the domain of $T$ for the regular $n$-gon, consider the sequence of sets $\{ \mathrm{SCR}_{\l}(x)  \}_{0<\l\leq 1}$. If the point $x$ does not lie on the domain of $T_\l$, just define $ \mathrm{SCR}_{\l}(x) $ by the singleton $\{x\}$. We say that there is convergence of picture if for almost every point in the domain of $T$, the sequence converges to $\mathrm{SCR}_{1}(x) $ in the Hausdorff topology.
\end{definition}

In the following section, we will prove the convergence of picture for $n = 3,4,5,6,8$, and $12$. For the proof we will use a simple observation: if (i) periodic tiles for $T$ is dense and (ii) every periodic tile is $\l$-stable, then it implies convergence of picture. The item (i) follows from the lattice structure for $n \in \{3,4,6\}$ and from the renormalization scheme for $n \in \{5,8,12\}$. The second item will follow from a $\l$-stability criterion we prove in the next subsection together with some analysis of the periodic tiles. 

\subsection{Unfolding Scheme for Outer Billiards}\label{subsec:unfolding}

The outer billiards map is often called the dual billiards map. Indeed on the sphere, there is an exact duality with inner billiards (see \cite{Tabachnikov1995}). For the inner billiards map inside a polygonal region, there is a very useful unfolding method in which instead of reflecting the particle trajectory, one reflects the polygon while keeping the trajectory straight. The unfolding scheme we are going to describe is simply the dual version of it for outer billiards. 

Let $P$ be a convex polygon and $x \in X$ be a point in the domain of $T$. Assume that $x$ reflects on the vertex $v$ of $P$. Then instead of moving the point $x$, we reflect $P$ with respect to $v$. We can simply repeat this procedure, obtaining a connected chain of copies of $P$ surrounding the point $x$. 

Clearly, a point $x \in X$ is periodic if and only if after several reflections, the polygon comes back to its starting position. We may name these copies of $P$ by $P, T(P), T^2(P)$, and so on. Note that for a point $y$ to have the same dynamics with $x$ it is necessary and sufficient that $y$ belongs to the region formed by the angle between $T^i(P)$ and $T^{i+1}(P)$ for every $i$ (e.g. the region bounded by two dashed lines in Figure \ref{fig:unfolding}). From this criterion, one sees that the tile for $x$ is just the intersection of many half-planes. Since these half-planes come in at most $2n$ slopes, we immediately deduce:

\begin{lemma}[see \cite{MR1145593},\cite{MR1354670}]
	Let $P$ be an $n$-gon. Given a periodic point $x$ for $T$, its periodic tile is a polygon with the number of sides not exceeding $2n$. 
\end{lemma}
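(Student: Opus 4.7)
The plan is to execute the slope-counting argument already sketched in the paragraph preceding the lemma. I would fix a periodic point $x$ of period $k$ and form the unfolded chain of copies $P_0 = P, P_1, \ldots, P_k = P_0$, where $P_{i+1}$ is the image of $P_i$ under reflection through its relevant vertex. By the criterion recalled just above, the tile $\mathrm{SCR}_1(x)$ is precisely the set of $y \in X$ that lie, for every $0 \leq i < k$, inside the angular wedge pinched between $P_i$ and $P_{i+1}$ at the shared reflecting vertex. This exhibits the tile as the intersection, inside $X$, of finitely many half-planes, each of whose bounding line extends an edge of one of the copies $P_j$.

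Next I would invoke the fact that a point reflection sends every line to a parallel line. Since each $P_j$ is obtained from $P$ by a finite composition of point reflections, every edge of every $P_j$ is parallel to some edge of $P$. Consequently the bounding lines of all the defining half-planes take at most $n$ distinct directions in the plane, regardless of how long the periodic orbit is.

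To finish, I would apply the elementary observation that any intersection of half-planes whose boundary lines use at most $n$ directions is a convex polygon with at most $2n$ sides, because for each direction at most two parallel sides (one per orientation of the half-plane) can survive in the intersection. Applied to the tile, this gives the desired bound. The only mild subtlety is the bookkeeping needed to confirm that each wedge-constraint really corresponds to an edge of $P$ after the parallelism reduction, and that removing the singular set $S$ from the ambient domain does not change the shape of the polygonal region; once this accounting is in place, no serious obstacle remains and the $2n$ bound is essentially automatic from the centrally symmetric character of the unfolding.
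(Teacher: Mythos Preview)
Your proposal is correct and follows essentially the same approach as the paper: the paper itself does not give a separate formal proof but simply observes, in the paragraph immediately preceding the lemma, that the tile is an intersection of half-planes whose bounding lines come in at most $2n$ slopes, and deduces the bound from that. Your write-up is a careful elaboration of exactly this argument (in fact slightly sharper, since you note there are only $n$ edge directions preserved under point reflection, with the factor of $2$ entering only because each direction can bound the intersection from two sides).
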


\begin{figure}
	\begin{centering}
		\includegraphics[scale=0.6]{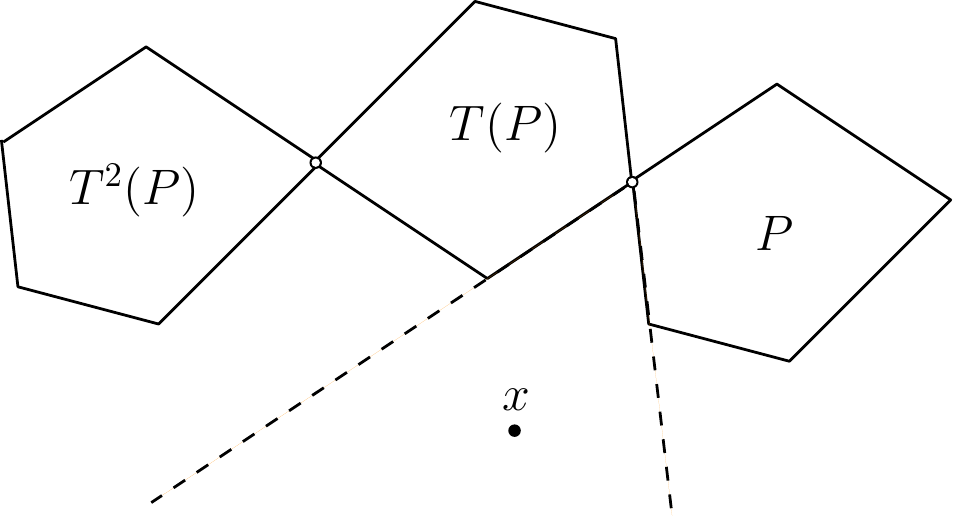}
		\par\end{centering}
	\caption{Unfolding scheme}\label{fig:unfolding}
\end{figure}

\begin{definition}[Symmetric tiles]\label{def:sym}
	Let $P$ be a regular $n$-gon centered at the origin and $Q$ be a periodic tile. Rotate $Q$ counterclockwise by integer multiples of $2\pi/n$ across the origin. The rotated images are again periodic tiles, and if some of them are obtained by $T$-iterates of $Q$, we say that the periodic tile $Q$ is symmetric. 
\end{definition}

From the previous lemma, it is clear that 

\begin{lemma}\label{lem:symmetry}
	If $Q$ is a symmetric tile, then $Q$ is rotationally symmetric. In particular, when $P$ is a regular $p$-gon for $p$ prime, a symmetric tile is necessarily either a regular $p$-gon or a regular $2p$-gon. 
\end{lemma}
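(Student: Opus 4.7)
The argument splits into two parts, corresponding to the two assertions.

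For the first assertion, the key is that the outer billiards map $T$ commutes with the rotation $R := R_{2\pi/n}$ about the origin, since $R$ is an orientation-preserving symmetry of $P$. By symmetry of $Q$ there exist $j \in \{1,\dots,n-1\}$ and $k \geq 1$ with $R^j(Q) = T^k(Q)$. By Lemma \ref{lem:tile}, $T^k$ acts on $Q$ as a composition of $k$ point reflections, hence as an orientation-preserving Euclidean isometry $\phi_k$ of the plane. Therefore $f := R^{-j}\circ \phi_k$ is an orientation-preserving isometry of the plane sending $Q$ onto itself. Since $Q$ is bounded, $f$ cannot be a nontrivial translation, so it is either a nontrivial rotation (in which case $Q$ is rotationally symmetric about the center of $f$) or the identity (in which case $R^j(Q)=Q$ and $Q$ is rotationally symmetric about the origin).

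For the second assertion, set $n = p$ prime and consider $H := \{j \in \mathbb{Z}/p : R^j(Q) \text{ is a } T\text{-iterate of } Q\}$. The commutation $R \circ T = T \circ R$ makes $H$ a subgroup of $\mathbb{Z}/p$, and since $Q$ is symmetric $H \neq \{0\}$, so $H = \mathbb{Z}/p$. Pick $k_0$ with $R(Q) = T^{k_0}(Q)$ and set $g := R^{-1} \circ \phi_{k_0}$, a rotation fixing $Q$ of angle $k_0 \pi - 2\pi/p$. To pin down its order I control the parity of $k_0$: comparing the codes of $R(Q)$ and $T^{k_0}(Q)$ yields the relation $a_{i+k_0} \equiv a_i + 1 \pmod p$, and iterating $m$ times (where $m$ is the period of $Q$) forces $p \mid m$; the identity $T^m|_Q = \mathrm{id}$ combined with $T^m$ having total rotation angle $m\pi$ forces $m$ to be even; and $R^p(Q) = Q = T^{p k_0}(Q)$ gives $m \mid p k_0$. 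Writing $m = ps$ (and taking $p$ odd; the case $p=2$ is trivial), evenness of $m$ forces $s$ even, and $s \mid k_0$ follows from $m \mid p k_0$, so $k_0$ is even. Hence $g$ has angle $-2\pi/p \pmod{2\pi}$ and order exactly $p$, i.e., $Q$ is $p$-fold rotationally symmetric about the center of $g$.

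Finally, $Q$ has at most $2p$ sides by the $2n$-side bound of the previous lemma, so being $p$-fold rotationally symmetric it has either $p$ or $2p$ sides. In the first case the $p$ sides form a single $g$-orbit of equal length and $Q$ is a regular $p$-gon. In the second case each of the $p$ slopes of $P$ appears exactly twice among the sides of $Q$; monotonicity of the oriented tangent directions around a convex polygon then forces them to be equispaced by $\pi/p$, matching the tangent directions of a regular $2p$-gon. The main remaining obstacle is to show the two $g$-orbits of sides have equal length, upgrading the $p$-fold symmetry to full $2p$-fold. I expect this to follow from the identity $\sigma T \sigma^{-1} = T^{-1}$ valid for any reflection symmetry $\sigma$ of $P$: showing that $\sigma(Q)$ lies in the $T$-orbit of $Q$ produces an orientation-reversing isometry stabilizing $Q$, which together with $g$ generates a dihedral group of order $2p$ and forces regularity.
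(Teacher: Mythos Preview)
The paper offers no proof beyond ``From the previous lemma, it is clear that'', so your argument goes well beyond what is written there. Two issues remain.

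\emph{First assertion.} Your handling of the case $f=\mathrm{id}$ is incorrect: from $R^{-j}\circ\phi_k=\mathrm{id}$ you deduce $R^j(Q)=Q$, but all that follows is $\phi_k=R^j$ as plane isometries, which merely restates $T^k(Q)=R^j(Q)$ and says nothing about $Q$ itself being $R^j$-invariant. Since $f$ has rotation angle $k\pi-2\pi j/n$, this degenerate case can only arise when $n$ is even, $j=n/2$, and $k$ is odd; for $n$ odd the rotation angle is never a multiple of $2\pi$, so $f$ is automatically a nontrivial rotation and your argument goes through. Your second-assertion argument is in any case unaffected, since there you compute the angle of $g=R^{-1}\circ\phi_{k_0}$ directly and show it equals $-2\pi/p\not\equiv 0$ once $k_0$ is even.

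\emph{Second assertion, the $2p$-side case.} You correctly isolate the real difficulty: $p$-fold rotational symmetry together with the $2p$-side bound yields only a $2p$-gon whose sides fall into two $g$-orbits of possibly different lengths (a ``semiregular'' $2p$-gon), not necessarily a regular one. Your proposed fix via a reflection $\sigma$ with $\sigma T\sigma^{-1}=T^{-1}$ would require showing that $\sigma(Q)$ actually lies in the $T$-orbit of $Q$, and the hypothesis that $Q$ is symmetric in the sense of Definition~\ref{def:sym} does not obviously supply this; ``I expect this to follow'' is not a proof. The paper does not address this point either; in the applications to $n=5,8,12$ the shapes of all periodic tiles are read off from the explicit renormalization scheme rather than deduced from this lemma.
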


This lemma explains, to some extend, abundance of periodic tiles of regular $n$- and $2n$-gons outside the regular $n$-gon. Let us now introduce a $\l$-stability criterion. 

\begin{proposition}\label{prop:criterion}
	Let $Q$ be a tile for $P$ and let $k$ be its period, which we assume to be even. Pick any point $p$ in $P$ and consider the sequence of points $p_0=p, p_1,...,p_k=p_0$ obtained by unfolding $P$ with respect to $Q$. Then $Q$ is $\l$-stable if and only if the barycenter $\sum_{i=0}^{k-1}p_i/k$ lies in the interior of $Q$. 
\end{proposition}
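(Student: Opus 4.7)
The plan is to use Lemma \ref{uniqueness} to reduce $\l$-stability to tracking the explicit candidate $q_C(\l)$ for the code $C = \{a_0,\dots,a_{k-1}\}$ of $Q$, compute $\lim_{\l \nearrow 1} q_C(\l)$, and recognize the limit as the barycenter $\bar p := \frac{1}{k}\sum_{i=0}^{k-1} p_i$. The $\l$-stability claim will then follow from an openness argument on the position of $\bar p$ relative to the open set $Q$.

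First I would compute $\lim_{\l \nearrow 1} q_C(\l)$ by L'Hopital applied to the formula in Lemma \ref{uniqueness}. Since $k$ is even, the denominator $1-(-\l)^k = 1-\l^k$ has a simple zero at $\l=1$ with derivative $-k$. The tile condition from Lemma \ref{lem:tile} (namely that $\Psi = T_{k-1}\circ\cdots\circ T_0$ acts as the identity) translates to the algebraic identity $\sum_{i=0}^{k-1}(-1)^i v_{a_i} = 0$, which is exactly the vanishing at $\l=1$ of the numerator $\sum_{i=0}^{k-1}(-\l)^{k-1-i}v_{a_i}$. One application of L'Hopital then yields
\begin{equation*}
\lim_{\l\nearrow 1} q_C(\l) = \frac{2}{k}\sum_{i=0}^{k-1}(k-1-i)(-1)^i v_{a_i}.
\end{equation*}

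Next I would compute the barycenter $\bar p$ directly from the unfolding and match the two expressions. Writing $\sigma_i := R_{v_{a_0}} \circ \cdots \circ R_{v_{a_{i-1}}}$ for the cumulative unfolding isometry (with $R_v(x) = 2v-x$), a short induction shows $\sigma_{i+1} = \sigma_i \circ R_{v_{a_i}}$ and $\sigma_i(x) = (-1)^i x + w_i$ with $w_i = 2\sum_{j=0}^{i-1}(-1)^j v_{a_j}$. Setting $p_i = \sigma_i(p)$, the $p$-dependent contribution $\sum_{i=0}^{k-1}(-1)^i p$ vanishes since $k$ is even, and swapping order of summation in $\sum_i w_i$ gives $\sum_i w_i = 2\sum_j (k-1-j)(-1)^j v_{a_j}$. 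Dividing by $k$ reproduces the L'Hopital expression above exactly, so $\bar p = \lim_{\l\nearrow 1} q_C(\l)$; in particular $\bar p$ is independent of the choice of $p\in P$.

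For the final step, I would translate this limit identification into $\l$-stability by openness. If $\bar p$ lies in the open tile $Q$, then $q_C(\l) \in Q$ for all $\l$ sufficiently close to $1$; continuity in $\l$ propagates to each of the first $k$ iterates, so $T_\l^i(q_C(\l))$ lies in the angular wedge at $v_{a_i}$ for $i=0,\dots,k-1$, realizing code $C$ and hence $\l$-stability. Conversely, if $\bar p \notin \overline{Q}$, then the algebraic iterate $\Psi_{i^*}(\bar p)$ fails to lie in the closed wedge at $v_{a_{i^*}}$ for some step $i^*$, and by continuity the $T_\l$-orbit of $q_C(\l)$ deviates from code $C$ at step $i^*$, so $\l$-stability fails. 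The main obstacle is the identification $\lim q_C(\l) = \bar p$, since the algebraic L'Hopital limit and the geometric barycenter come from independent computations that coincide only after invoking both the tile condition and the parity-based decomposition of $\sigma_i$; a secondary delicate point is the borderline case $\bar p \in \partial Q$, which is codimension-one and is excluded by the \emph{interior} phrasing of the statement.
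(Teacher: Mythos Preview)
Your proof is correct and arrives at the same limit formula $\frac{2}{k}\sum_{i}(k-1-i)(-1)^i v_{a_i}$ via L'H\^opital, and the openness argument for the equivalence with $\l$-stability is the same as in the paper. The one place you genuinely diverge is in identifying this limit with the barycenter $\bar p$. You compute $\bar p$ directly: write $p_i = \sigma_i(p) = (-1)^i p + 2\sum_{j<i}(-1)^j v_{a_j}$, note the $p$-term cancels by parity, and swap the double sum. The paper instead exploits cyclic symmetry: it defines $V_j$ as the L'H\^opital limit computed from the $j$th cyclic shift of the code (based at $p_j$), shows by a telescoping calculation that $V_{j+1}-V_j=0$, and then averages $V_0=\frac{1}{k}\sum_j V_j$ to collapse the weighted sum to $\frac{1}{k}\sum_j p_j$. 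Your route is shorter and more transparent once one has the closed form for $\sigma_i$; the paper's route avoids ever writing $p_i$ explicitly and instead leverages that $\l$-stability is invariant under cyclic shifts of the code, which is conceptually pleasant but slightly more work. Both handle the boundary case $\bar p\in\partial Q$ with the same level of (non-)rigor; the paper dismisses the converse as ``clear'' and you flag it as a codimension-one issue, so you are not missing anything the paper supplies.
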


\begin{proof}
	Let $\{v_0,...,v_{k-1} \}$ be the sequence of vertices of $P$ corresponding to the code of $Q$. Recall that we had
	\begin{equation}
	q(\lambda)=\dfrac{1-(-\lambda)}{1-(-\lambda)^{k}}(\sum_{i=0}^{k-1}(-\lambda)^{k-1-i}v_{i}).\nonumber
	\end{equation}
	We claim that $Q$ is $\l$-stable if and only if $\lim_{\l \nearrow 1}q(\l)$ is contained in the interior of $Q$. Indeed, if the limit is contained in $Q$, there is some $\ep >0$ such that for $1-\ep < \l \leq 1$, each iterate $T^j q(\l)$ is contained in $T^j Q$, respectively. Therefore, $q(\l)$ gives a periodic orbit for $T_\l$ for $\l$ in this range. The other direction is clear.
	
	Now we do the unfolding. We will obtain a sequence of copies of $P$ surrounding $Q$; call them $P, T(P),$ and so on. Simply pick $p_0$ to be the origin; then, the vector from $p_0$ to the vertex where first reflection of $P$ occurs is simply $v_0$. In the same way, the vector from the point $p_i$ to the vertex where $i+1$th reflection occurs is $(-1)^i v_i$. 
	
	Since $Q$ is a periodic tile, we have $\sum_{i=0}^{k-1}(-\lambda)^{k-1-i}v_{i}=0$. Apply L'Hospital's rule to $q(\l)$ to obtain  
	\begin{equation}
	\lim_{\l \nearrow 1} q(\lambda)=\dfrac{2}{k} \big(\sum_{i=0}^{k-1} (k-1-i)w_i).\label{eq:st}
	\end{equation}
	
	That is, we are asking whether $V_0 := p_0 + \frac{2}{k} \big(\sum_{i=0}^{k-1} (k-1-i)w_i) \in \mathrm{int}(Q)$ or not. However, we could have started from the tile $T(Q)$. Since the code for $T(Q)$ is given by shifting that for $Q$, we see that $Q$ is $\l$-stable if and only if $V_1 := p_1 + \frac{2}{k} \big(\sum_{i=0}^{k-1} (k-1-i)w_{i+1}) \in \mathrm{int}(Q)$.
	
	Indeed, we have 
	\begin{equation} 
	\begin{split}
	V_{1}-V_{0} & = p_{1}-p_{0}+\frac{2}{k}(\sum_{i=0}^{k-1}(k-1-i)w_{i}-\sum_{i=0}^{k-1}(k-1-i)w_{i+1}) \nonumber \\
	& = p_{1}-p_{0}+\frac{2}{k}(\sum_{i=0}^{k-1}w_{i}-(k-1)w_{0}) \\
	& = p_{1}-p_{0}+\frac{2}{k}(-kw_{0})=p_{1}-p_{0}-2w_{0}=0,
	\end{split}
	\end{equation}
since $\sum_{i=0}^{k-1} w_i=0$. We define $V_j$ for each $T^j(Q)$ and similarly as above, $V_j$ is independent of $j$. Hence
	\[
	V_0 = \dfrac{1}{k}\sum_{j=0}^{k-1}V_j = \dfrac{1}{k}\big(\sum_j p_j + \sum_j \sum_i \dfrac{2}{k}(k-1-i)w_{i+j} \big) =\dfrac{1}{k}\sum_{j}p_j
	\] by an interchange of summation. 
\end{proof}

\begin{corollary}\label{cor:sym}
	Any symmetric tile $Q$ is $\l$-stable. Indeed, the curve $q(\l)$ of hypothetical $T_\l$-periodic orbits corresponding to $Q$ converges to the center of $Q$ as $\l \nearrow 1$. 
\end{corollary}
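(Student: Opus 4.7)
The plan is to show that $V_0 := \lim_{\l \nearrow 1} q(\l)$ equals the center $c$ of $Q$. Once this is established, $V_0$ lies in the interior of $Q$ (since $Q$ is rotationally symmetric by Lemma \ref{lem:symmetry}, and a rotationally symmetric polygon contains its center of symmetry in its interior), and $\l$-stability follows directly from Proposition \ref{prop:criterion}. Throughout I assume the period $k$ is even, doubling the code if necessary.

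Let $R$ denote the rotation by $2\pi/n$ about the origin, and let $l$ be the integer such that $T^l Q = R Q$, guaranteed by the definition of a symmetric tile. Since $R$ permutes the vertices of $P$ by incrementing each label by $1$ modulo $n$, while the code of $T^l Q$ is also the cyclic shift by $l$ of the code of $Q$, comparing the two descriptions forces $v_{i+l} = R v_i$ for all $i$ (indices modulo $k$). I would then compute the analogous limit $V_l$ associated with the code of $T^l Q$ in two ways. First, formula (\ref{eq:st}), the linearity of $R$, and the identity $v_{i+l} = R v_i$ give $V_l = R V_0$. Second, Lemma \ref{uniqueness} (the uniqueness of the hypothetical periodic point) implies $q_{T^l Q}(\l) = T_\l^l q_Q(\l)$, since both sides are fixed points of the cyclically shifted composition; passing to the limit $\l \nearrow 1$ yields $V_l = T^l V_0$. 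Hence $R V_0 = T^l V_0$.

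The same equation $R v = T^l v$ is satisfied by $c$, since $R$ and $T^l$ are both isometries carrying $Q$ onto $T^l Q$, and therefore both send $c$ to the center of $T^l Q$. To conclude $V_0 = c$ it suffices to show this equation has a unique solution. As a composition of $l$ reflections, $T^l$ is a translation when $l$ is even and a point reflection when $l$ is odd; meanwhile $R$ is a nontrivial rotation by $2\pi/n$, so for $n \geq 3$ neither $R - I$ nor $R + I$ is singular, and the affine map $v \mapsto R v - T^l v$ is a bijection of the plane. Since $c$ is a solution, it is the only one, and $V_0 = c$. I do not expect a significant obstacle; the only step requiring any care is this uniqueness case analysis, which is mild once one invokes $n \geq 3$.
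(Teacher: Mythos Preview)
Your argument is correct, with one small caveat: the definition of symmetric tile only guarantees $R^m Q = T^l Q$ for \emph{some} $m\in\{1,\dots,n-1\}$, not necessarily $m=1$. Everything you wrote goes through with $R$ replaced by $R^m$; the uniqueness step then needs $R^m - (-I)^l$ invertible, which fails only when $n$ is even, $m=n/2$, and $l$ is odd. In the paper's applications transitivity gives $m=1$, so this edge case never arises, but for the corollary as stated you should either note this or dispose of it separately.

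Your route is genuinely different from the paper's. The paper invokes Proposition~\ref{prop:criterion} in its barycenter form: taking $p$ to be the center of $P$, the unfolded copies $p_0,\dots,p_{k-1}$ inherit a rotational symmetry about the center $c$ of $Q$ (because the whole unfolding picture around $Q$ does), so their barycenter is $c$ and one is done in a line. You instead work directly with the analytic limit $V_0=\lim_{\l\nearrow 1}q(\l)$, exploit the code relation $v_{i+l}=Rv_i$ to get $RV_0=T^lV_0$, observe that $c$ satisfies the same affine equation, and finish by showing that equation has a unique solution. The paper's argument is shorter and more geometric, leaning on the picture of the unfolded chain; yours is more algebraic, never needs to visualize the unfolding, and makes transparent exactly which linear-algebra fact ($R^m\ne\pm I$) is driving the conclusion.
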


\begin{proof}
	By rotational symmetry of the unfolding picture, $\sum_i p_i=0$ where we can take $p$ to be the center of $P$.
\end{proof}

Let us mention that for $n=4$, one can explicitly calculate the sum in equations \ref{eq:periodic_point} or \ref{eq:st} and obtain the same conclusion; see appendix for details.

One may wonder how a periodic tile can be not $\l$-stable. Let $P$ be any convex polygon and $Q$ be a periodic tile outside $P$ of even period $N$. Then for $\l$ very close to 1, $T^N_{\l}(Q)$ will be a polygon similar to $Q$, converging to $Q$ as $\l \nearrow 1$. In this setting, one sees that $Q$ is $\l$-stable if and only if for $\l$ sufficiently close to 1, $T^N_{\l}(Q)$ is contained in $Q$. Indeed we are going to demonstrate a non-stable tile later. 

We prove some corollaries. First, from equation \ref{eq:st} we deduce that

\begin{corollary}\label{cor:lattice}
	Let $Q$ be a symmetric periodic tile outside a regular $n$-gon. Then the coordinates of its center lies in the field $\mathbb{Q}(e^{2\pi i/n})$. 
\end{corollary}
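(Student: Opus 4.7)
The plan is to read off the conclusion directly from the explicit formula for the center that the preceding proposition and corollary have just produced. By Corollary \ref{cor:sym}, the center of a symmetric tile $Q$ coincides with $\lim_{\l \nearrow 1} q(\l)$, where $q(\l)$ is the hypothetical $T_\l$-periodic point associated with the code of $Q$. Equation \ref{eq:st} then rewrites this limit as
\[
\lim_{\l \nearrow 1} q(\l) \;=\; \frac{2}{k}\sum_{i=0}^{k-1}(k-1-i)\,w_i,
\]
which is a $\mathbb{Q}$-linear combination of the vectors $w_i$; and the $w_i$ are, up to sign, exactly the vertex vectors $v_i$ of $P$ appearing in the code of $Q$.

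The second ingredient is just the arithmetic of regular $n$-gons. Placing $P$ in standard position (centered at the origin with one vertex at $1$, say), identifying $\mathbb{R}^2$ with $\mathbb{C}$, the vertex set of $P$ is $\{\,e^{2\pi i j/n} : j=0,1,\dots,n-1\,\}\subset \mathbb{Q}(e^{2\pi i/n})$. Since $\mathbb{Q}(e^{2\pi i/n})$ is a field containing $\mathbb{Q}$, any $\mathbb{Q}$-linear combination of the $v_i$ (or their negatives) again lies in $\mathbb{Q}(e^{2\pi i/n})$. Combining this with the formula above finishes the proof.

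There is no substantial obstacle; the only thing worth flagging is the normalization convention. The statement is tacitly about $P$ in a position in which its vertices lie in $\mathbb{Q}(e^{2\pi i/n})\subset \mathbb{C}$ (e.g.\ centered at the origin with a vertex at $1$). If one wishes to translate/scale/rotate $P$, the correct invariant statement is that the center of $Q$ lies in the $\mathbb{Q}(e^{2\pi i/n})$-affine hull of the vertices of $P$, which is exactly what the rational combination in \ref{eq:st} yields. Note also that the parity assumption on $k$ used in Proposition \ref{prop:criterion} is harmless: one can always double the code, which multiplies both $k$ and the sum by $2$ and leaves the limit unchanged.
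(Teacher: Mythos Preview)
Your proof is correct and follows exactly the route the paper intends: use Corollary~\ref{cor:sym} to identify the center of the symmetric tile with $\lim_{\l\nearrow 1} q(\l)$, then read off from equation~\eqref{eq:st} that this limit is a $\mathbb{Q}$-linear combination of the vertices $v_i$, which lie in $\mathbb{Q}(e^{2\pi i/n})$ once $P$ is normalized with vertices at the $n$th roots of unity. Your remark about the normalization convention is a welcome clarification that the paper leaves implicit.
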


This will hold for \textit{all} periodic tiles in the cases $n \in \{3,4,5,6,8,12\}$. 

Next, it is not hard to see that for each regular polygon $P$, there exists an infinite sequence of necklace periodic tiles which consists of regular polygons with size identical to that of $P$ (e.g. necklace pentagons in Figure \ref{fig:necklace}). They are symmetric tiles, and we conclude: 

\begin{corollary}
	Any regular polygon has infinitely many $\l$-stable periodic tiles. In particular, the number of periodic points for $T_\l$ diverges as $\l \nearrow 1$. 
\end{corollary}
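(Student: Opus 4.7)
The plan is to exhibit an infinite family of necklace periodic tiles outside $P$, verify they are symmetric in the sense of Definition \ref{def:sym}, and then apply Corollary \ref{cor:sym}.

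I would center $P$ at the origin and let $R$ denote rotation by $2\pi/n$ about the origin. A direct check using that $R$ permutes the vertices of $P$ and preserves the ``$P$-on-the-left'' convention of Definition \ref{system} yields the commutation $T \circ R = R \circ T$ on the common domain. Consequently, any periodic tile $Q$ whose $T$-orbit is invariant under $R$ is automatically symmetric in the sense of Definition \ref{def:sym}; equivalently, any $Q$ satisfying $T(Q) = R^{\pm 1}(Q)$ qualifies.

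To produce infinitely many such tiles, I would analyze the outer billiards dynamics along an axis of symmetry of $P$. Because this axis is $R$-compatible (the family of symmetry axes is permuted by $R$), the preimages of the singular set $S$ meet it in a sequence of points accumulating at infinity, partitioning it into infinitely many open intervals of constant combinatorial code. On each such interval the first-return map after $n$ iterates (resp. $2n$, if $n$ is odd) is a composition of point reflections through the $n$ rotational images of a single vertex $v$ of $P$, and this composition evaluates to the identity by the elementary identity $\sum_{k=0}^{n-1} (-1)^k R^k v = 0$ for even $n$ (and its odd-$n$ analogue after doubling). Each interval therefore produces a necklace periodic tile congruent to $P$ whose $T$-orbit is the corresponding $R$-orbit, so the tile is symmetric.

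Corollary \ref{cor:sym} then yields $\l$-stability for every member of this infinite family, establishing the first assertion. For the divergence of the count of $T_\l$-periodic points, fix $N \in \mathbb{N}$, pick $N$ tiles $Q_1, \ldots, Q_N$ from the family with stability thresholds $\epsilon_1, \ldots, \epsilon_N > 0$, and let $\epsilon = \min_i \epsilon_i$. For $\l \in (1 - \epsilon, 1)$, each $Q_i$ supports a $T_\l$-periodic orbit, and these $N$ orbits are distinct by Lemma \ref{uniqueness} since the codes of the $Q_i$ differ. Hence $T_\l$ has at least $N$ periodic points for such $\l$, and since $N$ was arbitrary the count diverges as $\l \nearrow 1$.

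The main obstacle is the geometric verification in the second paragraph: producing infinitely many distinct intervals of constant code along the axis, and certifying that the return map on each interval really is the identity (rather than a nontrivial translation or point reflection). Once one commits to the symmetric axis and exploits the commutation $T \circ R = R \circ T$, both reduce to elementary but parity-sensitive computations with compositions of point reflections through the rotational images of a single vertex.
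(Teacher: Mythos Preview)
Your overall strategy---exhibit the infinite sequence of necklace tiles, verify they are symmetric in the sense of Definition~\ref{def:sym}, and invoke Corollary~\ref{cor:sym}---is exactly the paper's approach. The paper simply asserts the existence and symmetry of the necklace tiles without further argument, so in that sense you have gone further than the text.

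However, the details you supply in the second paragraph do not work. You claim that on each interval along the axis the return map after exactly $n$ (or $2n$) iterates is the identity, so that $T(Q)=R^{\pm 1}(Q)$ and the $T$-orbit coincides with the $R$-orbit. This is true only for the innermost necklace layer. For the $k$th layer the period grows with $k$ (e.g.\ $4k$ for the square, $20,40,\dots$ for the pentagonal necklace tiles around the regular pentagon), so the $T$-orbit is strictly larger than the $R$-orbit and the identity $\sum_{j=0}^{n-1}(-1)^j R^j v=0$ is not the relevant one. What actually holds, and what suffices for Definition~\ref{def:sym}, is the weaker statement $T^{m_k}(Q)=R(Q)$ for some $m_k$ depending on the layer; this follows from the commutation $T\circ R=R\circ T$ together with the fact that the full $T$-orbit of a necklace layer is $R$-invariant. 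Your argument also never justifies why each interval produces a tile \emph{congruent to $P$}; nothing in the axis-of-symmetry picture you describe pins down the shape or size of the resulting tile.

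So the plan is right and matches the paper, but the ``after $n$ iterates'' mechanism you propose is incorrect and would need to be replaced by the $T^{m_k}(Q)=R(Q)$ argument above. The final paragraph deriving divergence of the periodic-point count from $\lambda$-stability of finitely many tiles is fine.
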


\section{Main Results}\label{sec:main}

\subsection{Case of the Square}\label{sec:square}

Recall that the map $T$ outside the square simply permutes the open squares of the same label in Figure \ref{fig:grid}, and every point inside a square of label $k$ is periodic with period $4k$. From this picture, we observe that this `label' is a Lyapunov function; that is, it does not increase on any orbits. 

\begin{figure}
	\begin{centering}
		\includegraphics{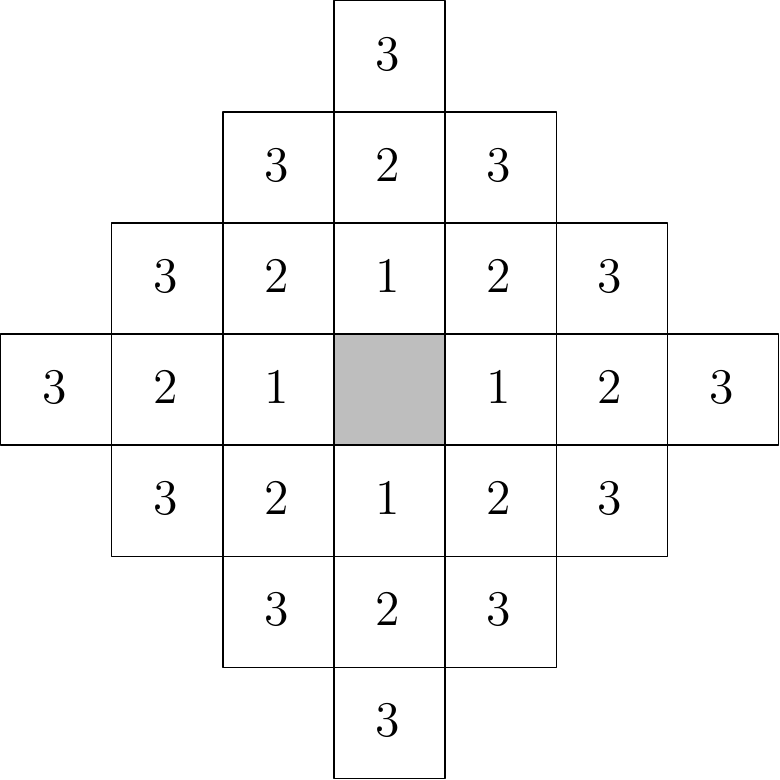}
		\par\end{centering}
	\caption{Square grid}\label{fig:grid}
\end{figure}

We will use the following simple fact (for the proof one may see the last section of \cite{Jeong2014}). 

\begin{lemma}[Boundedness of the orbits]
	\label{lem:bdd}
	Let $||\cdot||$ be any norm on the plane, and let $\{v_{1},...,v_{n}\}$ be the set of vertices of $P$. Then for any point $x$ in the domain of $T_\l$,
	\[
	\limsup_{k \rightarrow \infty }||T_{\l}^k(x)|| \leq \frac{1+\lambda}{1-\lambda}\max_{i}||v_{i}||
	\]. 
\end{lemma}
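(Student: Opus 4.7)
The plan is to reduce the statement to a one--dimensional linear recursion on $\|T_\l^k(x)\|$, obtained from the explicit affine formula for $T_\l$ acting at each step.

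First I would translate the geometric definition of $T_\l$ into an algebraic one. Reading the condition $|xv|:|vy|=1:\l$ with $y$ on the opposite side of $v$ from $x$, the vector $y-v$ equals $-\l(x-v)$, so
\[
T_\l(x) = (1+\l)\, v_{a(x)} - \l\, x,
\]
where $v_{a(x)}$ is the vertex on which $x$ reflects. This is the only place the geometry of the polygon enters; from here the argument is purely metric and does not use anything special about the norm.

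Next I would apply the triangle inequality to this identity. Writing $M := \max_i \|v_i\|$, I get immediately
\[
\|T_\l^{k+1}(x)\| \;\leq\; (1+\l)\,\|v_{a(T_\l^k x)}\| + \l\,\|T_\l^k(x)\| \;\leq\; (1+\l)M + \l\,\|T_\l^k(x)\|.
\]
This is a scalar linear recursion of the form $s_{k+1}\le \l s_k + C$ with $C=(1+\l)M$ and $0<\l<1$, so its solutions satisfy
\[
\|T_\l^k(x)\| \;\leq\; \l^k\|x\| + (1+\l)M\,\frac{1-\l^k}{1-\l}.
\]
Passing to the limit as $k\to\infty$ makes the first term vanish and gives exactly the bound $(1+\l)M/(1-\l)$ on the $\limsup$.

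There is no real obstacle here: the only subtlety is verifying that the argument truly works for an \emph{arbitrary} norm, but once the affine identity $T_\l(x)=(1+\l)v-\l x$ is in hand, all subsequent steps are just the triangle inequality and a geometric series, both of which are valid in any normed plane. No compactness or equivalence-of-norms argument is needed, and the bound is uniform in the initial point $x$ up to the decaying $\l^k\|x\|$ term.
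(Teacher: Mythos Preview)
Your argument is correct: the affine identity $T_\l(x)=(1+\l)v-\l x$ followed by the triangle inequality and the resulting geometric-series recursion is exactly the natural proof, and it works verbatim for any norm. The paper itself does not prove this lemma but defers to \cite{Jeong2014}; your approach is the standard one and almost certainly coincides with what is found there.
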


\begin{theorem}\label{thm:sq1}
	For each $0<\l<1$, and for $n=3, 4, 6$, there exists finitely many periodic orbits for $T_\l$ outside the regular $n$-gon to which all other orbits are attracted.
\end{theorem}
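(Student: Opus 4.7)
The plan is to combine the boundedness of orbits (Lemma \ref{lem:bdd}) with a Lyapunov argument on the lattice of $T$-periodic tiles, and then exploit the contractive linear part of $T_\l$ to identify finitely many attractors.

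By Lemma \ref{lem:bdd}, every forward orbit of $T_\l$ eventually enters and remains in a bounded region $B_\l$. Since $P$ is affine-lattice for $n\in\{3,4,6\}$, the set $\mathbb{R}^2\setminus P$ decomposes into a lattice-like family of $T$-periodic convex tiles, only finitely many of which intersect $B_\l$. On the interior of each tile the map $T_\l$ is affine with linear part $-\l I$, so $T_\l$ is a piecewise strict contraction on $\overline{B_\l}$.

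The geometric heart of the argument is to promote the label $L$ (the ring index from the paragraph preceding the theorem, defined for the square as in Figure \ref{fig:grid} and analogously for the triangle and hexagon) from a Lyapunov function for $T$ to one for $T_\l$. I would use the identity
\[
T_\l(x) \;=\; (1-\l)\,v \;+\; \l\,T(x),
\]
which expresses $T_\l(x)$ as a convex combination of the reflection vertex $v\in\partial P$, where $L$ is minimal, and $T(x)$, where $L=L(x)$ since $T$ preserves labels. For $n\in\{3,4,6\}$ each sub-level set $\{L\le k\}$ is a convex polygon, so $L$ is quasi-convex, and hence $L(T_\l(x))\le\max\{L(v),L(T(x))\}=L(x)$. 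Being integer-valued, non-negative, and non-increasing along orbits, $L$ stabilizes at some orbit-dependent value $k^\ast$.

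Once $L\equiv k^\ast$, the orbit is trapped in the finite ring $R_{k^\ast}$, on which $T$ acts as a cyclic permutation of period $N$. For any fixed tile-itinerary of length $N$ the composition $T_\l^N$ is affine with linear part $\l^N I$, a strict contraction; two orbits sharing a symbolic code for $m$ steps are separated by a factor $\l^m$ thereafter. Because the symbolic alphabet on $R_{k^\ast}$ is finite, a pigeonhole-plus-contraction argument forces each orbit's code to be eventually periodic, and Banach's fixed point theorem then identifies the limit as the unique $T_\l$-periodic orbit with that code, given explicitly by Lemma \ref{uniqueness}. Only finitely many periodic tile itineraries are available inside $B_\l$, so only finitely many $T_\l$-periodic attractors arise.

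The main obstacle I anticipate is the Lyapunov step: verifying convexity of the sub-level sets of $L$ for the triangle and hexagon requires an explicit description of their periodic tilings, which the paper has only sketched. A secondary subtlety is formulating the pigeonhole-plus-contraction argument cleanly enough to accommodate orbits that jump between different tiles within $R_{k^\ast}$ before their code stabilizes.
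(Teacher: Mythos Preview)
Your proposal is correct and follows the same skeleton as the paper's proof: the ring label $L$ is a Lyapunov function for $T_\l$, it stabilizes, and then boundedness (Lemma~\ref{lem:bdd}) caps the number of attractors. Your convex-combination identity $T_\l(x)=(1-\l)v+\l T(x)$ together with quasi-convexity of $\{L\le k\}$ is a clean justification of the monotonicity that the paper simply asserts.

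The one place you work harder than necessary is the endgame on $R_{k^\ast}$. You invoke a pigeonhole-plus-contraction argument to force the code to be \emph{eventually} periodic; the paper instead observes that once $L$ has stabilized the itinerary is \emph{immediately} periodic, because if $x$ lies in a tile $S\subset R_{k^\ast}$ then the segment from the reflection vertex $v$ to $T(x)\in T(S)$ meets $R_{k^\ast}$ only in $T(S)$, so $T_\l(x)\in R_{k^\ast}$ forces $T_\l(x)\in T(S)$. This removes your ``secondary subtlety'' entirely: no jumping between tiles of $R_{k^\ast}$ is possible, and the orbit converges to the unique fixed point of the single forced itinerary via Lemma~\ref{uniqueness}. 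Your more general argument would still go through, but the direct observation is shorter and avoids the pigeonhole bookkeeping.
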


\begin{proof}
	Consider the case $n=4$. Notice that with any $0<\lambda<1$, $T_{\lambda}$ never increases the index of the square that the orbit of a point $p$ lies on. Since
	the index cannot decrease indefinitely, it should stabilize at some $k$. Notice that once it stabilizes, the symbolic coding of the point simply follows that of a periodic point of $T$ with index $k$. Therefore, upon iteration of $T_\l$, our orbit converges to a periodic orbit of $T_\l$ of index $k$ (which was at most unique). From Lemma \ref{lem:bdd} we know that $\limsup_k ||T^k p|| \leq C(\l)$ for any $\l<1$ where $C(\l)$ is some constant, there cannot exist infinitely many periodic orbits for $T_\l$. The same argument goes through cases $n=3, 6$. 
\end{proof}

All tiles are symmetric in the sense of Definition \ref{def:sym} and from Corollary \ref{cor:sym}, we have that all periodic tiles for regular polygons with $n=3, 4,$ and $6$ are $\l$-stable. With the observation after Definition \ref{def:conv}, we have

\begin{proposition}\label{prop:conv}
	We have convergence of picture for $n=3,4,$ and $6$.
\end{proposition}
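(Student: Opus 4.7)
The plan is to reduce the proposition to the ``simple observation'' recorded just after Definition \ref{def:conv}: that density of periodic tiles for $T$ together with $\l$-stability of every such tile forces convergence of picture. In the sentence immediately preceding the statement, the author already remarks that for $n \in \{3,4,6\}$ every periodic tile is symmetric in the sense of Definition \ref{def:sym} and hence $\l$-stable by Corollary \ref{cor:sym}. Combined with the affine-lattice theorem of \cite{MR1145593}, which says that every $T$-orbit is periodic in these cases, both hypotheses of the observation are already in hand. The only remaining task is therefore to justify the observation itself.

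To do that, I would pick almost any $x$ in the domain of $T$ and assume $x$ lies in the interior of a periodic tile $Q = \mathrm{SCR}_1(x)$ of some period $k$. By $\l$-stability together with Proposition \ref{prop:criterion}, for all $\l$ sufficiently close to $1$ there is a $T_\l$-periodic point $q(\l)$ whose $T_\l$-code matches the $T$-code of $Q$, and $q(\l)$ sits in the interior of $Q$. In the unfolding picture of Section \ref{subsec:unfolding}, $\mathrm{SCR}_\l(q(\l))$ is the intersection of the same finite family of at most $2n$ half-planes that cut out $Q$ when $\l = 1$, the only difference being that the reflections building the unfolded chain of copies of $P$ now carry a contraction factor of $\l$. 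Since this chain is a finite composition of affine maps depending continuously on $\l$, these half-planes converge as $\l \nearrow 1$ to their $\l = 1$ counterparts, whence $\mathrm{SCR}_\l(q(\l)) \to Q$ in the Hausdorff topology. For $\l$ close enough to $1$ we then have $x \in \mathrm{SCR}_\l(q(\l))$ and therefore $\mathrm{SCR}_\l(x) = \mathrm{SCR}_\l(q(\l))$, so $\mathrm{SCR}_\l(x) \to \mathrm{SCR}_1(x)$ as required.

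The step that I expect to require the most care is the Hausdorff convergence of the cutting half-planes in the unfolding picture. Because the period $k$ is a fixed finite number once $Q$ is fixed and the unfolded polygon chain depends continuously on $\l$, this is essentially bookkeeping: one writes explicit formulas for the at most $2n$ half-planes in terms of the unfolded vertex positions and checks uniform convergence over the finite unfolding index. No new ingredient beyond the material already developed in Section \ref{Pre} should be required, and the proposition follows by assembling the pieces above.
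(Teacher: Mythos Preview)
Your proposal is correct and matches the paper's approach exactly: the paper simply invokes the observation after Definition~\ref{def:conv} together with the facts (stated in the sentence before the proposition) that every tile for $n\in\{3,4,6\}$ is symmetric, hence $\l$-stable, and that periodic tiles are dense by the affine-lattice theorem. You go further than the paper by actually sketching why the observation holds; the paper never proves it.

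Two small points of care in your sketch. First, for $\l<1$ the region $\mathrm{SCR}_\l(q(\l))$ is cut out by at most $2k$ half-planes (one cone condition per step of the period-$k$ code), not necessarily $2n$: the reduction to $2n$ slopes in Section~\ref{subsec:unfolding} uses that $T$ is a piecewise isometry, which fails for $T_\l$. This is harmless, since $k$ is fixed and continuity in $\l$ is all you need. Second, $\mathrm{SCR}_\l$ is defined via the \emph{infinite} code, so you should note that the first-$k$-codes region $R_\l$ already equals $\mathrm{SCR}_\l(q(\l))$: the composite $\Phi=T_\l^k$ restricted to $R_\l$ is a homothety of ratio $\l^k$ (periods are even) centered at the interior point $q(\l)$, and a convex set is star-shaped about any interior point, so $\Phi(R_\l)\subset R_\l$ and the code repeats. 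This is presumably part of the ``bookkeeping'' you allude to, but it is the one step that is not purely formal.
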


\subsection{Case of the Regular Pentagon}\label{sec:pentagon}

For reader's convenience, we describe in some detail the behavior of the map $T$ outside a regular pentagon, following \cite{MR1354670}. We fix our $P$ in the plane to have vertices as five 5th roots of unity. Our goal consists of enumerating all periodic tiles for the regular pentagon.

See Figure \ref{fig:necklace} where two largest periodic tiles outside the regular pentagon are drawn. These periodic orbits are called necklaces as each of the orbit separates the plane into two regions.

\begin{figure}
	\begin{centering}
		\includegraphics[scale=0.5]{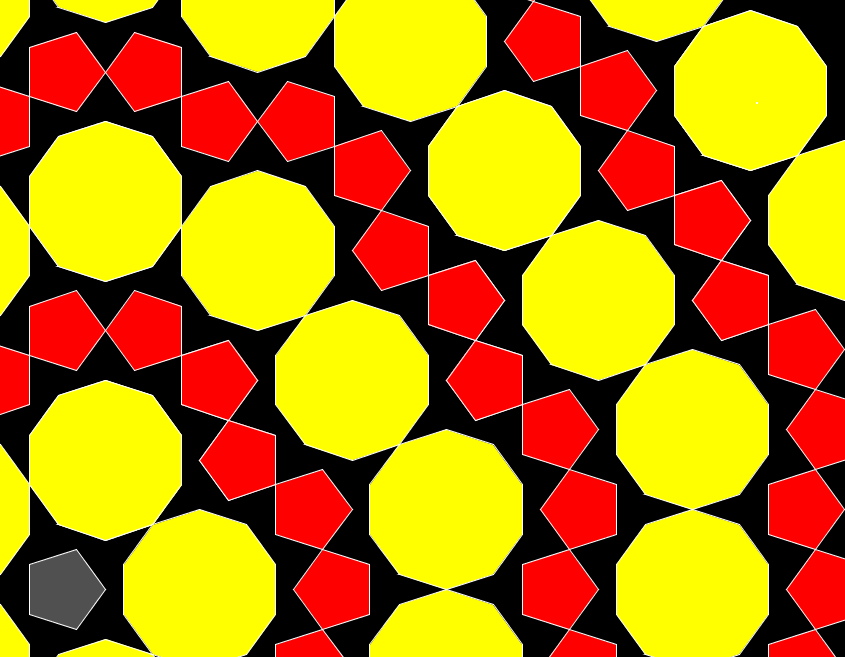}
		\par\end{centering}
	\caption{Necklace periodic orbits}\label{fig:necklace}
\end{figure}

First, pentagonal tiles in the figure come with periods 20, 40,... and have the same size with $P$. Indeed, these tiles are characterized by the property of self-duality; if we unfold $P$ around them, we simply recover their $T$-iterates on the plane. It means that $T$ acts transitively on each layer of these pentagons. Next, we see regular decagonal tiles of period 5, 15, 25,... and so on. Again $T$ acts transitively on each layer. These two sequences of necklaces divide the plane into invariant regions, say, $I_0, I_1, I_2,...$ and so on, starting from the innermost one.

Modulo 5-fold rotational symmetry, region $I_0$ is partitioned into five wedge-shaped figures. Fix one of them and call it $W$. Notice that all the other regions $I_1,I_2,...$ can be covered by (not necessarily disjoint) many copies of $W$. Indeed, when we cover each invariant region by wedges, the first-return map of $T$ is well-defined within each wedge, and this first-return map is conjugate to the first-return map on $W$. Such a statement becomes clearer in the unfolding coordinates (introduced in \cite{MR1354670}).

We now describe dynamics on the wedge $W$. We identify each point in $W$ with its images under rotation around the center by multiples of $2\pi/5$. With this identification, $T$ induces a self-map on $W$ which we still denote by $T$. 

The wedge is partitioned into two triangles $\triangle{KOL}$ and $\triangle{NLM}$ and $T$ acts as a rotation on each of them (Figure \ref{fig:wedge}). Then one can see that there is a regular decagon (with center $O_1$) and two regular pentagons which are invariant by $T$ and $T^2$, respectively. Let us call this regular decagon by $D_1$ and one of two regular pentagons by $P_1$. 

Now let $\Gamma$ be the affine map sending $KOMN$ to $K_1 O M_1 N_1$ (these two wedges are similar). It is straightforward to verify that $\Gamma$ is a renormalization map; that is, 
(1) $\Gamma$ is a conjugacy for $T$ on $W$ with the first-return map of $T$ on the small wedge $\Gamma(W)$.
(2) $T$-iterates of $\Gamma(W)$ cover $W$ modulo periodic domains $D_1, P_1,$ and $T(P_1)$.

From property (1), we find two infinite sequences of periodic tiles for $T$; $\{ \Gamma^i (D_1) \}_{i=0}^{\infty}$ and $\{ \Gamma^i (P_1) \}_{i=0}^{\infty}$. From property (2), we know that the areas of those periodic tiles together with $T$-iterates add up to the area of $W$. Therefore we have found all periodic tiles. One can further prove that the number of iterates of each periodic tile does not divide 5. Therefore, if we go back to the initial outer billiards map, $T$ must act transitively on each level of pentagonal and decagonal periodic tiles.

By conjugacy, we see that for each invariant region $I_i$, one has corresponding sequences of pentagonal and decagonal periodic tiles which densely fills out each region. With some additional effort, one can show that $T$ acts transitively on each level in these regions as well. 

\begin{figure}
	\begin{centering}
		\includegraphics[scale=1]{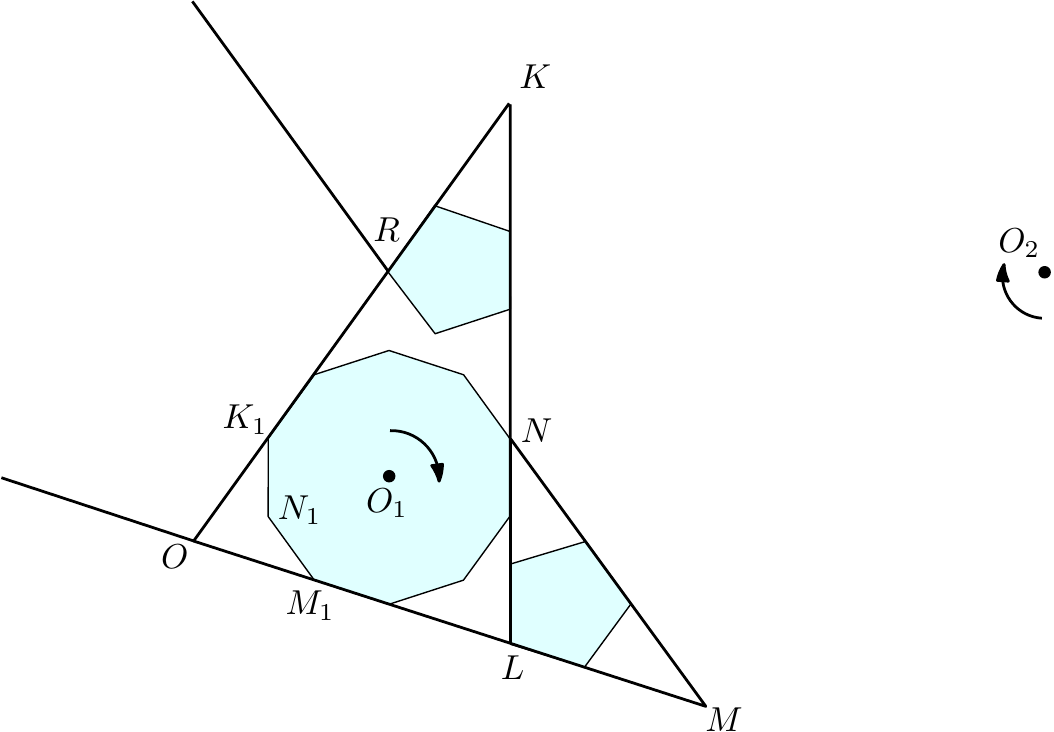}
		\par\end{centering}
	\caption{Dynamics on the wedge}\label{fig:wedge}
\end{figure}

Transitivity is important for us, as it implies symmetry of the tiles (Definition \ref{def:sym}). From Corollary \ref{cor:sym}, we have arrived at:

\begin{theorem}\label{thm:stability}
	Every periodic tile outside the regular pentagon is $\l$-stable.
\end{theorem}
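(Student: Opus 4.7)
The plan is to reduce Theorem \ref{thm:stability} to Corollary \ref{cor:sym} by showing that every periodic tile outside the regular pentagon is symmetric in the sense of Definition \ref{def:sym}. Once symmetry is established, $\l$-stability follows immediately. So the whole task becomes: for an arbitrary periodic tile $Q$, verify that each of the five rotational copies of $Q$ about the center of $P$ is equal to some $T$-iterate of $Q$.

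First, I would handle the tiles inside the innermost invariant region $I_0$. By the renormalization analysis recalled above, every periodic tile in the wedge $W$ is either $\Gamma^i(D_1)$ or $\Gamma^i(P_1)$ for some $i\ge 0$, together with the finite set of $T$-iterates that cover the wedge-complement of the periodic domains at each renormalization level. Because $\Gamma$ is an affine conjugacy between $T|_W$ and the first-return map of $T$ on $\Gamma(W)$, the combinatorial structure of the $T$-orbit of any $\Gamma^i$-image is determined by the base case. For $i=0$, one checks directly from the wedge picture (Figure \ref{fig:wedge}) that the $T$-orbit of $D_1$ passes through all five $2\pi/5$-rotates of $D_1$ around the origin, and similarly for $P_1$; equivalently, the $T$-period of these tiles is not divisible by $5$. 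By the conjugacy, the same holds for all $\Gamma^i(D_1)$ and $\Gamma^i(P_1)$, so every periodic tile in $I_0$ is symmetric.

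Next, I would extend the argument to the outer invariant regions $I_j$ for $j\ge 1$. Each $I_j$ admits a wedge decomposition analogous to that of $I_0$, in which the first-return map of $T$ is conjugate to $T|_W$ via an affine map. This conjugacy transports the renormalization tower from $W$ into each wedge of $I_j$, so the classification of periodic tiles and the non-divisibility of $T$-periods by $5$ propagate to all layers. Combined with the five-fold symmetry of the configuration, this yields symmetry for every periodic tile in the plane, and an appeal to Corollary \ref{cor:sym} completes the proof.

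The main obstacle is bookkeeping the coprimality-with-$5$ statement uniformly. Within a single wedge, the first-return map records only the combinatorics modulo the $2\pi/5$-rotational identification, so proving symmetry requires showing that the \emph{global} $T$-orbit of a tile inside $I_j$ threads through all five rotational sectors rather than closing up prematurely in one of them. This is exactly the content of the remark in the pentagon discussion that ``the number of iterates of each periodic tile does not divide $5$''; formalizing it amounts to a short arithmetic argument on the return times at each renormalization level, but it is the only non-automatic part of the reduction. Everything else is a mechanical transport of the base-case check through the affine conjugacies $\Gamma$ and the wedge identifications.
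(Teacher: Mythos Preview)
Your proposal is correct and follows essentially the same route as the paper: enumerate all periodic tiles via the renormalization scheme for the pentagon, establish that $T$ acts transitively on each level (equivalently, that the period is never divisible by $5$), conclude symmetry in the sense of Definition~\ref{def:sym}, and invoke Corollary~\ref{cor:sym}. You have also correctly identified the one non-automatic step---tracking the coprimality-with-$5$ of the global period through the conjugacies---which the paper likewise flags but does not spell out in detail.
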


A similar type of effort will prove corresponding result for the regular octagon and the regular 12-gon. Moreover, 

\begin{proposition}\label{prop:conv2}
	We have the convergence of picture for the regular pentagon.
\end{proposition}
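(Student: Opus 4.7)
The strategy is to invoke the general principle recorded immediately after Definition \ref{def:conv}: convergence of picture holds whenever periodic tiles of $T$ are dense in the domain \emph{and} every periodic tile is $\l$-stable. Stability of every pentagonal tile is precisely Theorem \ref{thm:stability}, so two things remain to assemble: a full-measure statement about periodic tiles, and a continuity statement that turns $\l$-stability of an individual tile $Q$ into Hausdorff convergence $\mathrm{SCR}_\l(x) \to Q$ for points $x$ in the interior of $Q$.

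For the measure-theoretic input, I would quote the renormalization picture of this subsection directly. On each invariant annular region $I_i$, the $T$-orbits of the two sequences $\{\Gamma^j(D_1)\}_{j \geq 0}$ and $\{\Gamma^j(P_1)\}_{j \geq 0}$, together with their rotated copies, have total area equal to that of $I_i$ (this is exactly property (2) of the renormalization, iterated). Hence the set of periodic points for $T$ has full Lebesgue measure in the domain, and its boundary (a countable union of edges of open convex tiles) is Lebesgue-negligible. So for Lebesgue-almost every $x$ in the domain of $T$, the point $x$ lies in the interior of some periodic tile $Q = \mathrm{SCR}_1(x)$.

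For the continuity step, fix such a tile $Q$ of even period $k$ and unfold with respect to its code $\{a_0, \dots, a_{k-1}\}$. For every $\l$ near $1$, the set $\mathrm{SCR}_\l(x)$ is the intersection of the same fixed number of affine half-planes whose defining normals and offsets are real-analytic functions of $\l$ (they are determined by the partial compositions of reflect-and-contract maps at the vertices $v_{a_0}, \dots, v_{a_{k-1}}$). At $\l=1$ these half-planes are the defining inequalities of $Q$. By Theorem \ref{thm:stability} the candidate periodic point $q(\l)$ from Lemma \ref{uniqueness} lies in the interior of $Q$ once $\l$ is close enough to $1$, so $\mathrm{SCR}_\l(x)$ is a nonempty, uniformly bounded convex polygon varying continuously with $\l$. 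Hausdorff convergence of a finite intersection of continuously varying, uniformly non-degenerate half-planes then yields $\mathrm{SCR}_\l(x) \to Q$, as required by Definition \ref{def:conv}.

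The main obstacle I expect is not any single step but the clean combination of the two inputs; specifically, ensuring that the degenerate loci one has to exclude are genuinely measure-zero. Three such loci appear: the boundary of each periodic tile, the invariant necklaces themselves, and the (possibly nonempty) set of non-periodic points that the renormalization scheme does not assign to any tile. The first is negligible because it is a countable union of polygon edges; the second because it is a countable union of piecewise-linear curves; and the third because property (2) of the renormalization forces it to have measure zero on every wedge, hence on the whole domain. Once these exclusions are made, the argument above applies to almost every $x$, proving the proposition.
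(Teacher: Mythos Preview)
Your proposal is correct and follows exactly the route the paper intends: combine the density (full measure) of periodic tiles coming from the renormalization scheme with the $\l$-stability of every tile (Theorem~\ref{thm:stability}), via the observation recorded after Definition~\ref{def:conv}. In fact you supply more detail than the paper does---the paper states Proposition~\ref{prop:conv2} without proof, relying entirely on that observation; your continuity argument (that $\mathrm{SCR}_\l(x)$ is cut out by finitely many half-planes varying analytically in $\l$) is the right justification, and the one missing remark is that only the first $k$ constraints are needed because $T_\l^k$ is a homothety toward $q(\l)\in R_\l$ and $R_\l$ is convex, so $T_\l^k(R_\l)\subset R_\l$.
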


\subsection{Additional Remarks}\label{sec:fin}

Behavior of the outer billiards map outside a regular septagon is full of mysteries. See Figure \ref{fig:septagon} which shows a series of pentagonal tiles, which have period 57848 and diameter approximately 0.0003 (the regular septagon has radius 1). These tiles were found by R. Schwartz. 

As they are not rotationally symmetric, they are not symmetric in the sense of Definition \ref{def:sym}. Using Proposition \ref{prop:criterion}, we have checked with a computer program that these pentagonal tiles are not $\l$-stable. A rigorous verification of this computer calculation (as well as the very fact that such a pentagonal tile really exists) consists of comparing two numbers in the field $\mathbb{Q}[e^{2\pi i/7}]$. Such a calculation can be executed with exact arithmetic: See an elementary algorithm by P. Hooper \cite{HOOPER}. Moreover, due to the non-stability of this tile, it is very unlikely that the convergence of picture holds for the regular septagon, even `locally'. 

\begin{figure}
	\begin{centering}
		\includegraphics[scale=0.5]{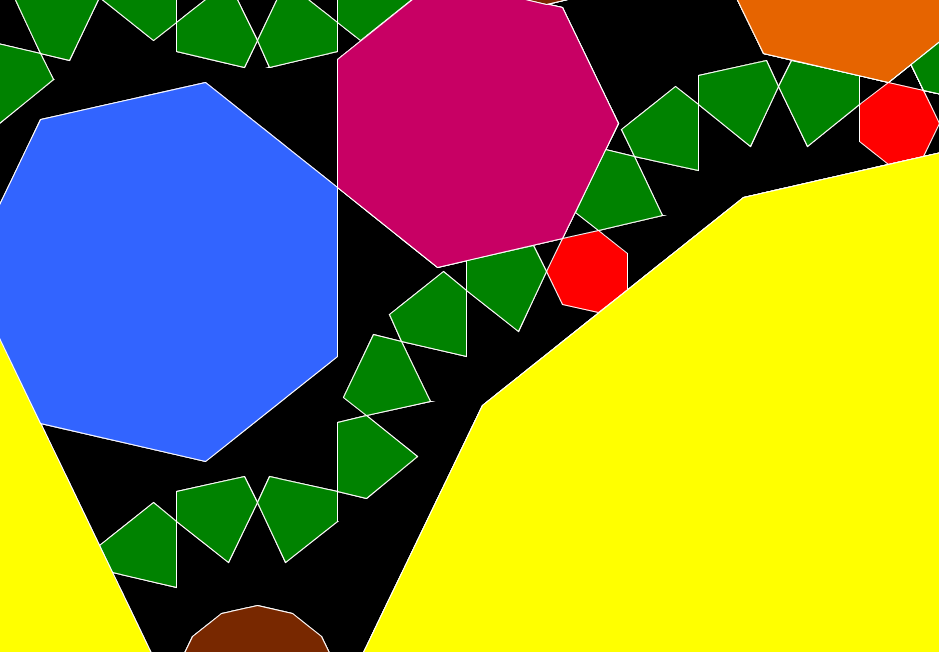}
		\par\end{centering}
	\caption{Non-symmetric pentagonal tiles}\label{fig:septagon}
\end{figure}

Indeed, R. Schwartz discovered lot more non-symmetric tiles. See Figure \ref{fig:septagon2} which shows all the periodic tiles in the region up to period 1048576. These pictures suggest that a complete renormalization scheme outside the regular septagon is unlikely to exist. One certainly needs to first come down to these very small scales ($\sim0.0001$) of `exotic' periodic tiles to search for such a scheme. 

\begin{figure}
	\begin{centering}
		\includegraphics[scale=0.5]{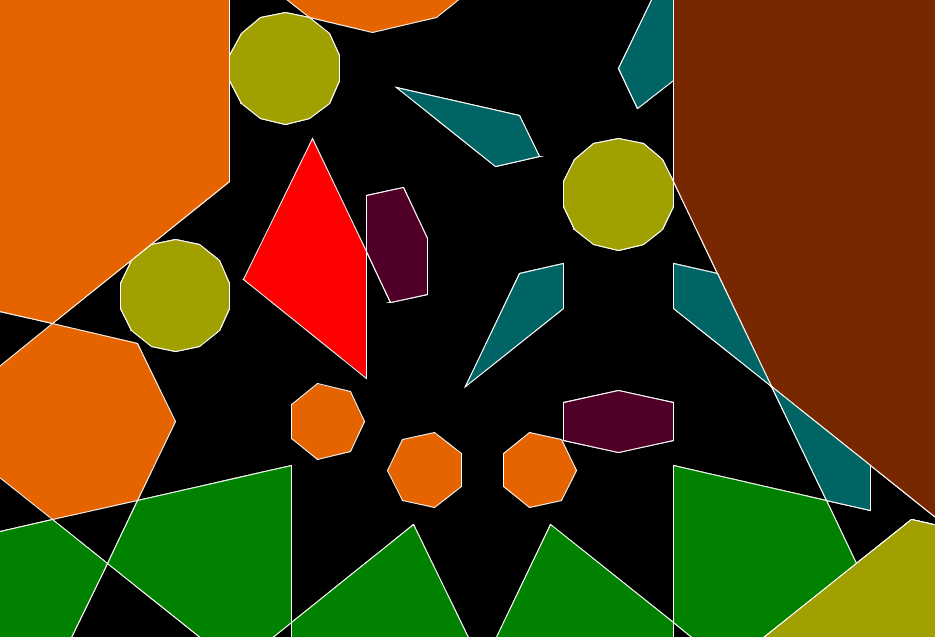}
		\par\end{centering}
	\caption{A zoo of `exotic' periodic tiles}\label{fig:septagon2}
\end{figure}

R. Schwartz (private communication) suspected that the question of existence of infinitely many non-similar periodic tiles outside the regular septagon is tied with the problem of whether arbitrarily large integers appear in the continued fractional expansion of the real part of $e^{2\pi i/7}$ (which is not known). 

Explicit connections between continued fractional expansions and renormalizations of piecewise isometric systems have been made in several situations. To describe such a connection, consider the simplest case where we have a one-parameter set of piecewise isometries $\{\Phi(\alpha), \alpha \in [0,1]\}$ (e.g. the outer billiards map of a one-parameter family of convex polygons). We are interested in cases where there exists a map $f:[0,1] \rightarrow [0,1]$ such that the dynamics of $\Phi(\alpha)$, at least locally, renormalizes to the dynamics of $\Phi(f(\alpha))$. In this situation, $\Phi(\alpha)$ will have a self-renormalization scheme if $\alpha$ is a periodic (or eventually periodic) point of $f$. Such schemes were found and analyzed in at least two different situations; the outer billiards map of `semi-regular' octagons in \cite{MR3186232}, and a certain piecewise isometries on the `square pillowcase' in \cite{MR3010377}. Remarkably, in both situations the map $f$ is closely related to the continued fraction expansion. 

In any case, it does not seem to be a coincidence that the set of regular polygons whose renormalization scheme has found is precisely the values of $n$ for which $\phi(n) \leq 2$. One may hope to find a similar renormalization scheme when all vertices lie in a single quadratic field over the rationals. Such a statement was indeed proved in one-dimensional setting (i.e., interval exchange transformations) \cite{MR1482988} with a partial converse.

\section*{Acknowledgements}

I thank my advisors Prof. Tabachnikov, Prof. Hooper, T. Aougab, and D. Davis when I was a participant of the ICERM undergraduate research program in 2012. I thank Prof. Hooper and Prof. Schwartz for providing essential insights and guidance regarding the systems. 
The computer program developed by R. Schwartz and the program developed by P. Hooper and J. Lachance were both essential to this research. The former was used to generate Figures \ref{fig:necklace}, \ref{fig:septagon}, \ref{fig:septagon2}. Figure \ref{fig:cat1} was produced by R. Schwartz and I took it from the website \url{icerm.brown.edu}.

I am supported by the Samsung Scholarship.

\appendix 
\section{Appendix}\label{appendix}

Here we prove the `full' theorem for the square, which is a strengthening of Theorem \ref{thm:sq1}. This result was proved in the author's undergraduate thesis \cite{Jeong2013} and independently in \cite{GianluigiDelMagno2013} by a different method. 

\begin{theorem}
	\label{thm:square}
	Consider the case $n=4$. For each $k \geq 1$, if we let $\lambda_k$ be the unique root of the polynomial $p_k(\l)=1-\l^{k-1}-\l^k +\l^{2k}$ in $[0,1)$, then the sequence $\l_k$ is strictly increases to 1 with $\l_1=0$. 
	Then for all $\l_k < \l \leq \l_{k+1}$, there exists exactly $k$ periodic orbits for $T_\l$ and all other orbits are attracted to one of them. 
\end{theorem}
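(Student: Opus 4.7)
The strategy is to reduce the theorem, via the monotone-level argument of Theorem~\ref{thm:sq1}, to a one-variable polynomial calculation at each level. Specifically, the proof of that theorem shows that every orbit of $T_\lambda$ eventually settles at some level $k \geq 1$, after which its symbolic code agrees with that of the $T$-periodic orbit at level $k$. By rotational symmetry this level-$k$ $T$-orbit is a single cycle of period $4k$ realized by a symmetric tile in the sense of Definition~\ref{def:sym}, and Lemma~\ref{uniqueness} yields a unique candidate $T_\lambda$-periodic point $q_k(\lambda)$ with that code, given explicitly by formula~(\ref{eq:periodic_point}). Corollary~\ref{cor:sym} ensures $q_k(\lambda)$ is a genuine $T_\lambda$-periodic orbit for $\lambda$ close to $1$, so the theorem reduces to pinpointing, for each $k$, the infimum $\lambda_k$ of the $\lambda$-interval on which $q_k(\lambda)$ still lies inside the level-$k$ tile.

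\textbf{Deriving $p_k$.} Place the square so that its vertices are the fourth roots of unity and write out the code $C_k$ of the symmetric level-$k$ tile. The $\mathbb{Z}/4$-equivariance of $C_k$ (each quarter of $C_k$ is obtained from the previous by a $90^\circ$ rotation of vertex labels) makes the geometric sum in~(\ref{eq:periodic_point}) factor as a $k$-term sum against a simple geometric series in $(-\lambda)^k$, producing a closed-form expression for $q_k(\lambda)$. Comparing $q_k(\lambda)$ to the inward-facing side of its tile --- which is the first side reached as $\lambda$ decreases from $1$ --- and clearing denominators should produce exactly $p_k(\lambda) = 1 - \lambda^{k-1} - \lambda^k + \lambda^{2k} = 0$.

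\textbf{Analyzing $p_k$ and concluding.} Direct computation gives $p_k(0) = 1$, $p_k(1) = 0$, and $p_k'(1) = 1 > 0$, so $p_k$ is positive at $0$, vanishes at $1$ from above, and must have a root in $[0,1)$. Uniqueness of this root follows from a short monotonicity analysis: factoring $p_k'(\lambda) = \lambda^{k-2} h(\lambda)$ with $h(\lambda) = -(k-1) - k\lambda + 2k\lambda^{k+1}$, one checks that $h$ has exactly one sign change on $(0,1)$, so $p_k$ is decreasing then increasing and crosses zero exactly once. For $k=1$, $p_1(\lambda) = \lambda(\lambda-1)$ gives $\lambda_1 = 0$. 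The monotonicity $\lambda_k < \lambda_{k+1}$ comes from the identity $p_{k+1}(\lambda_k) = \lambda_k^{k-1}(1 - \lambda_k^2)(1 - \lambda_k^{k+1}) > 0$, obtained by substituting $p_k(\lambda_k) = 0$, and $\lambda_k \to 1$ is immediate since otherwise all nontrivial terms of $p_k(\lambda_k)$ would vanish in the limit and force $p_k(\lambda_k) \to 1$. Combining these, for $\lambda_k < \lambda \leq \lambda_{k+1}$ the candidates at levels $1,\dots,k$ are all genuine periodic orbits, no higher-level candidate exists, and the level-stabilization of Theorem~\ref{thm:sq1} guarantees every other orbit is attracted to one of these $k$ orbits.

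\textbf{Main obstacle.} The core difficulty lies in the first computation: correctly writing down the code $C_k$ of the canonical level-$k$ tile and verifying that the correct boundary side is hit by $q_k(\lambda)$ as $\lambda$ decreases. Working in unfolding coordinates (Section~\ref{subsec:unfolding}), where the tile becomes a standard unit square and the inward-facing side is unambiguous, should make the collapse to $p_k(\lambda) = 0$ transparent; all remaining steps are algebra on a single polynomial family.
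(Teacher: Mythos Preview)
Your approach is the paper's: reduce via the level-stabilization of Theorem~\ref{thm:sq1} to asking for which $\lambda$ the candidate $q_k(\lambda)$ from~(\ref{eq:periodic_point}) is a genuine $4k$-periodic point, extract the polynomial condition $p_k(\lambda)\le 0$, and analyze $p_k$. The paper carries out the first step with vertices $(\pm1,\pm1)$, obtaining the closed form
\[
q_k(\lambda)=\Bigl(-\tfrac{(1+\lambda)(1-\lambda^{2k})}{(1-\lambda)(1+\lambda^{2k})},\ \tfrac{(1+\lambda)(1-\lambda^{k})^{2}}{(1-\lambda)(1+\lambda^{2k})}\Bigr),
\]
and your analysis of $p_k$ (factoring $p_k'(\lambda)=\lambda^{k-2}h(\lambda)$, the identity $p_{k+1}(\lambda_k)>0$, and the limit argument) matches the paper's.

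There are two genuine gaps. First, you reduce to ``$q_k(\lambda)$ lies inside the level-$k$ tile,'' but this is only \emph{necessary}: for $q_k(\lambda)$ to define a $T_\lambda$-periodic orbit, all $4k$ iterates must lie in their respective cones, and the intermediate points along one side of the square are not rotational images of $q_k(\lambda)$. The paper constructs the full orbit explicitly (points $E_j,F_j,G_j,H_j$ as convex combinations along the four segments) and checks the extremal intermediate point $E_{k-1}$; its constraint happens to reduce to the \emph{same} inequality $p_k(\lambda)\le 0$, but this is a computation, not an automatic consequence of symmetry, and it is precisely the content of your ``main obstacle.''

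Second, you do not treat the endpoint $\lambda=\lambda_{k+1}$. There $q_{k+1}(\lambda_{k+1})$ lies on a singular ray and defines a \emph{degenerate} $4(k+1)$-periodic orbit. For the ``exactly $k$ periodic orbits'' and ``all other orbits are attracted to one of them'' clauses to hold at this endpoint, you must show that no orbit is asymptotic to the degenerate one; the paper does this by a short parity argument tracking which side of the singular ray nearby points fall on after $2(k+1)$ iterates. Without this, an orbit could in principle stabilize at level $k+1$ forever, and your level-stabilization reduction would not force it down to level $\le k$.
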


\begin{proof}[Proof of Theorem \ref{thm:square}]
	We begin with the statements involving $\l_k$. When $k=1$, $p_{k}(\l)=-\l+\l^{2}$ and $\l_{1}=0$. Now assume $k\geq2$.
	Since $p_{k}(0)=1$ and $p_{k}(1)=0$, it is enough to prove that
	there exists a point $a_{k}$ such that $p_{k}$ is decreasing in
	the interval $(0,a_{k})$ and increasing in the interval $(a_{k},1)$.
	The derivative $p_{k}'(\lambda)$ has the form $-\lambda^{k-2}q_{k}(\lambda)$,
	so let us show $q_{k}(\lambda)$ has a unique root $b_{k}$ in the
	interval $(0,1)$ such that $q_{k}(\lambda)>0$ when $0<\lambda<b_{k}$
	and $q_{k}(\lambda)<0$ when $b_{k}<\lambda<1$. The derivative $q_{k}'(\lambda)=k-2k(k+1)\lambda^{k}$
	is monotonic and has a unique root in $(0,1)$. Since $q_{k}(0)=k-1>0$
	and $q_{k}(1)=-1<0$, we are done.
	
	To prove $\lambda_{k}<\lambda_{k+1}$, since $p_{k}(\lambda)<0$ only
	when $\lambda>\lambda_{k}$ in the interval $(0,1)$, it is enough
	to check $p_{k}(\lambda_{k+1})<0$, which is elementary. Because $\lambda_{k}<1$
	for all $k$ and $\lambda_{k}$ is increasing, the limit of $\lambda_{k}$
	exists, which we denote by $l\leq1$. From the equation 
	\[
	1=\lim_{k\rightarrow\infty}l^{k-1}(1+l-\lim_{k\rightarrow\infty}l^{k+1}),
	\]
	one sees that $l=1$.
	
	Note that in light of the baby version of the theorem, it only remains to prove that the periodic orbit for $T_\l$ of index $k$ exists precisely on the interval $(\l_k,1]$. For definiteness, assume that the square has vertices $(\pm1,\pm1)$ and set $S_k$ to be the square of index $k$ with center $(-2k,0)$. Denote the first periodic part of the code of $S_k$ (which has length $4k$) by $C_k$ and the coordinates of the corresponding hypothetical periodic point by $q_k(\l)$. 
	
	Obviously, $q_k(\l) \in \overline{S_k}$ is necessary for the $k$th periodic orbit to exist for $T_\l$. The following two lemmas will conclude the proof by showing its sufficiency. 
	
	The plan is as follows. First, we show that the condition $q_k(\l) \in \overline{S_k}$ is equivalent with $\l_k \leq \l \leq 1$. Then, we proceed to show that when $\l_k < \l$, $q_k(\l)$ indeed defines a periodic orbit for $T_\l$. Finally, we need to take care of the case $\l = \l_k$ where a `degenerate' periodic orbit exists. 
\end{proof}

\begin{lemma}\label{Coordinates}
	For $0<\lambda\leq1$, coordinates of the hypothetical periodic point $q_{k}(\lambda)=(x_k(\l),y_k(\l))$ have the following explicit formulas:
	\[
	x_k(\lambda)=-\frac{(1+\l)(1-\lambda^{2k})}{(1-\l)(1+\lambda^{2k})} \qquad y_k(\lambda)=\frac{(1+\l)(1-\lambda^{k})^{2}}{(1-\l)(1+\lambda^{2k})}.
	\]
\end{lemma}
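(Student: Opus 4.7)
The plan is to invoke Lemma \ref{uniqueness} with the explicit code $C_k = \{v_0, \ldots, v_{4k-1}\}$ of the periodic orbit corresponding to $S_k$. That lemma gives
\[
q_k(\l) = \frac{1+\l}{1-\l^{4k}}\sum_{i=0}^{4k-1}(-\l)^{4k-1-i}v_i,
\]
so the whole task reduces to pinning down the $v_i$ and then summing.

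First, I would describe $C_k$ in closed form by tracing the $T$-orbit of the center $(-2k,0)$ of $S_k$. I expect two structural features: an antipodal symmetry $v_{i+2k}=-v_i$ for $0\le i<2k$ (since after $2k$ reflections the orbit lands at the antipodal level-$k$ tile, mirrored through the origin), and in the first half the coordinate pattern $v_{i,x}=(-1)^{i+1}$ throughout, together with $v_{i,y}=(-1)^{i+1}$ for $0\le i<k$ and $v_{i,y}=(-1)^i$ for $k\le i<2k$. This can be verified inductively in $i$: from the recursion $x_{j+1}=2v_j-x_j$ applied to the prescribed vertices, the partial orbit $x_i$ admits an explicit expression depending only on $i\bmod 4$ and the sign of $i-k$, and from this expression it is a handful of sign checks to confirm that $P$ indeed lies on the left of the ray from $x_i$ through the prescribed $v_i$.

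Second, I would carry out the sum. Using the antipodal symmetry together with $1-\l^{4k}=(1-\l^{2k})(1+\l^{2k})$ and $(-\l)^{2k}=\l^{2k}$, the $4k$-term sum collapses to
\[
q_k(\l)=-\frac{1+\l}{1+\l^{2k}}\sum_{i=0}^{2k-1}(-\l)^{2k-1-i}v_i.
\]
Since $v_{i,x}=(-1)^{i+1}$ throughout, the $x$-coordinate sum becomes the geometric series $1+\l+\cdots+\l^{2k-1}=\frac{1-\l^{2k}}{1-\l}$. The $y$-coordinate sum splits at $i=k$ into $\sum_{j=k}^{2k-1}\l^{j}-\sum_{j=0}^{k-1}\l^{j}$, which telescopes to $-\frac{(1-\l^k)^2}{1-\l}$ after pulling out a common factor of $\frac{1-\l^k}{1-\l}$. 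Substituting these back into the displayed expression for $q_k(\l)$ gives exactly the claimed formulas for $x_k(\l)$ and $y_k(\l)$.

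The main obstacle is the bookkeeping in identifying $C_k$, i.e., verifying the above pattern uniformly in $k$ by the inductive sign checks; once that combinatorial description is in hand, the algebraic manipulations of finite geometric series that complete the proof are entirely routine.
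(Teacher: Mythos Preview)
Your proposal is correct and follows essentially the same approach as the paper: identify the vertex sequence $C_k$ explicitly, plug into the formula from Lemma~\ref{uniqueness}, and sum the resulting geometric series. The paper only writes out the $x$-coordinate and leaves the pattern of the $v_{x,i}$ as something ``one can easily show,'' whereas you organize the same computation via the antipodal symmetry $v_{i+2k}=-v_i$ to halve the sum and carry through both coordinates; this is a tidy bit of bookkeeping but not a different method.
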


\begin{proof}
	We will only prove the statement regarding $x_k(\l)$. For $0 \leq i \leq 4k-1$, set $v_{x,i}$ be the $x$-coordinate of the vertex $v_i$, which corresponds to the $i$th code of $q_{k}$. For $k=3$, the sequence $\{ v_{x,i} \}$ will be -1, +1, -1, +1, -1, +1, +1, -1, +1, -1, +1, -1. One can easily show that the sequence of $v_{x,i}$ alternates between -1 and +1 for $i=1,...,2k$ starting with a -1, and again alternates for $i=2k+1,...,4k$, this time starting with +1. Now from equation \ref{eq:periodic_point}, we have
	\[
	x_k(\lambda)=(\sum_{i=0}^{4k-1} \lambda^{4k-1-i}(-1)^{i-1}v_{x,i})(1+\lambda)/(1-\lambda^{4k})
	\]
	and we can simply compute the right hand side as a sum of two geometric series. This gives us the desired formula.
\end{proof}

Notice that points $q_k(\l)$, viewed as a rational curve in $\l$, converges
to the center $(-2k,0)$ as $\lambda\rightarrow1$ (See Corollary \ref{cor:lattice}).

\begin{lemma}
	\label{Finiteness Lemma}
	The point $q_{k}(\lambda)$ is a $4k$-periodic point for $T_{\lambda}$ if and only if $\l_{k} < \l \leq 1$. For $ \l_k = \l$, $q_k(\l)$ defines a degenerate $4k$-periodic point for $T_\l$ which is non-attracting.
\end{lemma}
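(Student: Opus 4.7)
The plan is to reduce the claim to a single geometric condition: whether $q_k(\l)$ lies in the closed tile $\overline{S_k}$. The argument has three parts---translating this condition into the inequality $p_k(\l) \le 0$, verifying that it is also sufficient for $q_k(\l)$ to define a genuine periodic orbit, and handling the boundary case $\l = \l_k$.

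First, I would use the explicit formulas of Lemma~\ref{Coordinates} to characterize $q_k(\l) \in \overline{S_k}$. Since $\overline{S_k}$ is the axis-aligned square with corners $(-2k \pm 1, \pm 1)$, membership amounts to the two inequalities $|x_k(\l) + 2k| \le 1$ and $|y_k(\l)| \le 1$. Clearing denominators in $y_k(\l) \le 1$ gives $(1+\l)(1-\l^k)^2 \le (1-\l)(1+\l^{2k})$, and a short expansion identifies the difference of the two sides as $2\l \cdot p_k(\l)$; hence the $y$-inequality is exactly $p_k(\l) \le 0$, which is equivalent to $\l \ge \l_k$ by the polynomial analysis established at the start. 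A parallel and easier computation shows the $x$-inequality is automatic on the same range.

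For sufficiency, assume $\l > \l_k$, so $q_k(\l)$ lies in the open interior of $S_k$. I need to check that the $T_\l$-orbit visits the tiles $T^i(S_k)$ in the correct order, i.e.\ $|T_\l^i q_k(\l) - p_i^T|_\infty \le 1$ for each $i$, where $p_i^T$ is the center of $T^i(S_k)$. By Lemma~\ref{uniqueness}, each iterate equals (formally) $q_{C^{(i)}}(\l)$ for the cyclically shifted code $C^{(i)}$. The $D_4$-equivariance of $T_\l$ combined with the $\mathbb{Z}/4$ rotational symmetry of the orbit of tile centers forces $q_{C^{(i)}}(\l) - p_i^T$ to be the rotated image of $e_0 := q_k(\l) - (-2k, 0)$ by an element of $D_4$; in particular $|T_\l^i q_k(\l) - p_i^T|_\infty = |e_0|_\infty = y_k(\l)$ for every $i$. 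Hence all iterates are correctly placed precisely when $p_k(\l) \le 0$.

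The boundary case $\l = \l_k$ has $y_k(\l_k) = 1$, placing $q_k(\l_k)$ on the top edge of $S_k$, which for $k \ge 1$ lies on the singular ray $\{y = 1,\ x \le -1\}$. The map $T_{\l_k}$ is not classically defined there, but the formula still yields a formal fixed point of the $4k$-fold reflection-contraction composition, producing a degenerate periodic orbit; non-attractiveness follows because any $\l < \l_k$ destroys the orbit. The main obstacle will be the sufficiency step: rigorously establishing uniformly in $i$ that $|e_i|_\infty = |e_0|_\infty$, which requires carefully pinning down how the square's $D_4$ action interacts with the cyclic shift of $C_k$ and deducing this from the equivariance $R \circ T_\l = T_\l \circ R$ for $R \in D_4$.
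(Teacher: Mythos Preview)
Your necessity step is fine and matches the paper: the condition $y_k(\l)\le 1$ is exactly $p_k(\l)\le 0$. The sufficiency step, however, has a real gap. The $D_4$-equivariance of $T_\l$ together with the $\mathbb{Z}/4$ symmetry of the tile orbit only relates iterates that differ by a multiple of $k$ (and, with reflections, perhaps one more coset). It does \emph{not} force $q_{C^{(i)}}(\l)-p_i^T$ to be a $D_4$-image of $e_0$ for all $0\le i\le 4k-1$: the intermediate iterates along each side of the square configuration are not symmetry images of $q_k(\l)$, and their displacements from the corresponding tile centers are genuinely different. In the paper's language these are the points $E_{k-1},\dots,E_1$ lying on the segment $H_0E_0$, and the binding constraint is that the $y$-coordinate of $E_{k-1}$ (not of $H_0$) be at most $-1$. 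The paper computes this coordinate as a convex combination of $x_k(\l)$ and $y_k(\l)$ and checks that the resulting inequality is again $p_k(\l)\le 0$; this coincidence is the heart of the sufficiency proof and your symmetry argument does not supply it. The paper's route is to build all $4k$ points explicitly (rotate $H_0$ by $\pi/2$ three times, then interpolate geometrically along each side), verify the endpoint constraints, and then use similar triangles to show the set is $T_\l$-invariant.

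Your treatment of the degenerate case is also a non sequitur: non-attractiveness at $\l=\l_k$ is a statement about orbits of $T_{\l_k}$, and the fact that the periodic orbit disappears for $\l<\l_k$ says nothing about whether nearby points of the plane are asymptotic to it at $\l=\l_k$. The paper argues directly, locating which vertex a small open set adjacent to the singular point $E_{k-1}$ reflects on and showing that after $2k$ iterates it cannot remain on the correct itinerary.
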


Before we proceed to the proof, let us clarify the statement. Recall that $S$ was defined as the union of singular rays (Definition \ref{system}). By a degenerate periodic orbit (for $T_\l$), we mean a periodic orbit in which some of the points may lie on $S$. For those points, $T_\l$ is defined as one of two natural choices. To conclude the proof of the theorem, we must make sure that when $q_k(\l)$ defines a degenerate periodic orbit, no orbits are asymptotic to it.


\begin{proof}
	
	We begin with the observation that $y_k(\l)$ should be not greater than +1 for $q_k(\l)$ to define a valid periodic orbit. From the explicit formula for $y_k(\l)$, we see that $y_k(\l) \leq 1$ if and only if $\l^{2k}-\l^k+\l^{k-1}+1 \leq 0$, which happens if and only if $\l \in [\l_k, 1]$. Therefore, $\l_k \leq \l$ is necessary. Now we proceed to show the sufficiency.
	
	Fix a $\l_k \leq \l$ and for the point $H_0 := q_k(\l)$, we construct extra $4k-1$ points as follows. First, rotate $H_0$ counterclockwise with respect to the center by $\pi/2, \pi, 3\pi/2$ to obtain points $E_0, F_0,$ and $G_0$. On the segment $H_{0}E_{0}$, we pick points $E_{k-1},E_{k-2},...,E_{1}$ in a way that the lengths of segments satisfy $|\overline{E_{j+1}E_{j}}|/|\overline{E_{j}E_{j-1}}|=\lambda$ for all $1\leq j\leq k-1$ ($E_{k}=H_{0}$). Then we construct points $F_{j}$, $G_{j}$, $H_{j}$ similarly that $4k$ points have $\pi/2$-rotational symmetry with respect to the origin. 
	
	Let us show that points $E_{k-1},...,E_0$ gets reflected on the vertex $D=(+1,-1)$. For this, it is enough to show that the $x$-coordinate of $E_0$ does not exceed 1 (which is obvious as the $y$-coordinate of $H_0$ is positive) and that the $y$-coordinate of $E_{k-1}$ does not exceed -1. For the latter, from the construction we have that the $y$-coordinate of $E_{k-1}$ is given by the convex combination
	\[
	\hat{y}_k(\lambda)=\frac{1-\lambda^{k-1}}{1-\lambda^{k}}y_k(\lambda)+\frac{\lambda^{k-1}-\lambda^{k}}{1-\lambda^{k}}x_k(\lambda)
	\]
	and solving for $\hat{y}_k(\l) \leq -1$ gives again $\l^{2k}-\l^k+\l^{k-1}+1 \leq 0$. Now by symmetry, points $F_{k-1},...,F_{0}$ gets reflected on the vertex $C=(+1,+1)$, and so on. 
	
	We now claim that these $4k$ points are $T_\l$-invariant. By a direct computation using explicit formulas, one first verifies $T_\l H_0 = F_1$. Then consider two triangle $\triangle H_{0}H_{1}A$ and $\triangle F_{1}F_{2}A$. Since $H_{0},A,F_{1}$ are shown to be collinear and segments $H_{0}G_{0}$
	and $E_{0}F_{0}$ are parallel, $\angle H_{1}H_{0}A=\angle F_{2}F_{1}A$. Moreover, we have ratios $|\overline{H_{0}A}|/|\overline{F_{1}A}|=|\overline{H_{0}H_{1}}|/|\overline{F_{1}F_{2}}|=\lambda$
	from our construction. Therefore, these triangles are similar, and $T_{\lambda}H_{1}=F_{2}$. Likewise, we prove that $T_{\lambda}H_{j}=F_{j+1}$ for
	$j=1...k-1$, where $F_{k}=E_{0}$. Then we are done by rotational symmetry.
	
	Finally, we need to show that the degenerate periodic orbit formed by $q_{k}$ when $\l=\l_{k}$ is non attracting. It is enough to show
	that no points inside two lattice squares adjacent to the singular
	point $E_{k-1}$ converge to $E_{k-1}$. No points from the square
	above $E_{k-1}$ certainly cannot converge to $E_{k-1}$ since it
	reflects on $A$ in the beginning. Next, we may assume that a small
	open set inside the lattice square below $E_{k-1}$, after $2k$ iterates
	of $T_{\lambda_{k}}$, becomes an open set touching the singular point
	$G_{k-1}$ because otherwise we are done. However, since $2k$ is
	even, the latter open set lies below $G_{k-1}$ and reflects on $C$
	rather than on $B$. We are done. 
	
\end{proof}

For the equilateral triangle and the regular hexagon, we have exactly the same statements: For each of them, there exists an increasing sequence of numbers $l_1 = 0, l_2,...$ with the limit 1 and the property that for any $ l_i < \l \leq l_{i+1}$ there exists $i$ periodic orbits to which all other orbits are asymptotic. Our method of proof is expected to carry over to these cases as well.

\bibliographystyle{plain}
\bibliography{regular}

\end{document}